\numberwithin{equation}{section}
\newcommand{\CC}{\mathbb{C}}
\newcommand{\PP}{\mathbb{P}}
\newcommand{\ZZ}{\mathbb{Z}}
\newcommand{\cal}{\mathcal}
\def\cO{{\cal O}}
\def\cQ{{\cal Q}}
\def\cR{{\cal R}}
\def\cU{{\cal U}}
\def\and{\quad{\rm and}\quad}
 \DeclareMathOperator{\Ext}{Ext}
  \DeclareMathOperator{\Hom}{Hom}
\DeclareMathOperator{\rk}{rk}
\def\to{\rightarrow}
\def\Hom{\operatorname{Hom}}
\def\Sym{\operatorname{Sym}}
\def\Coker{\operatorname{Coker}}
\def\dim{\operatorname{dim}}
\def\Gr{\operatorname{Gr}}
\def\GL{\operatorname{GL}}
\newtheorem{prop}{Proposition}[section]
\newtheorem{theo}[prop]{Theorem}
\newtheorem{lemm}[prop]{Lemma}
\newtheorem{coro}[prop]{Corollary}
\theoremstyle{definition}
\newtheorem{defi}[prop]{Definition}
\def\beq{\begin{equation}}
\def\eeq{\end{equation}}
\def\PP{\mathbb{P}}
\def\CC{\mathbb{C}}
\def\cO{\mathcal{O}}
\title[Moduli spaces of Ulrich bundles on the Fano 3-fold $V_5$]{Moduli spaces of Ulrich bundles \\ on the Fano 3-fold $V_5$}
\author{Kyoung-Seog Lee} 
\address{Center for Geometry and Physics, Institute for Basic Science (IBS), Pohang 37673, Republic of Korea 
\vskip 0.2em 
Current Address: 
Institute of the Mathematical Sciences of the Americas, University of Miami, 1365 Memorial Drive, Ungar 515, Coral Gables, FL 33146, USA}
\email{kyoungseog02@gmail.com}
\author{Kyeong-Dong Park}
\address{Center for Geometry and Physics, Institute for Basic Science (IBS), Pohang 37673, Republic of Korea
\vskip 0.2em 
Current Address: 
School of Mathematics, Korea Institute for Advanced Study (KIAS), Dongdaemun-gu, Seoul 02455, Republic of Korea}
\email{kdpark@kias.re.kr}
\thanks{}
\subjclass[2010]{Primary 14J60, 14F05, 14J45}
\keywords{Derived category, Fano threefold $V_5$, moduli space, quiver representation, Ulrich bundle.}
\begin{document}

\begin{abstract}
We study moduli spaces of Ulrich bundles of rank $r \geq 2$ on the Fano 3-fold $V_5$ of Picard number 1, degree 5 and index 2. 
We prove that the moduli space of stable Ulrich bundles of rank $r$ on $V_5$ can be identified with a smooth $(r^2+1)$-dimensional open subset of the moduli space of stable quiver representations with dimension vector $(r, r)$ of the Kronecker quiver with 2 vertices and 3 arrows.
\end{abstract}

\maketitle

\section{Introduction}

Ulrich bundles form interesting classes of vector bundles on higher dimensional algebraic varieties. 
Recently, existence and moduli of Ulrich bundles have drawn a lot of attention. 
It turns out that a large class of algebraic varieties admit Ulrich bundles (see \cite{Beauville} for more details) and 
it seems that the moduli space of Ulrich bundles of a given variety reflects many interesting features of the variety.

Kuznetsov studied instanton bundles on some Fano 3-folds via their bounded derived categories of coherent sheaves in \cite{Kuznetsov}. 
Using derived categories, Lahoz, Macr\`{i} and Stellari studied moduli spaces of Ulrich bundles on cubic 3-folds and 4-folds in \cite{LMS1, LMS2}, 
and Cho, Kim and the first named author studied Ulrich bundles on intersections of two 4-dimensional quadrics 
in \cite{CKL}. 

In this paper, we study moduli spaces of Ulrich bundles on $V_5$ which is the unique smooth Fano 3-fold of Picard number 1, degree 5 and index 2. 
The 
variety $V_5$ is a very interesting Fano 3-fold which enjoy many beautiful geometric and topological properties. 
Vector bundles on $V_5$ was studied by Arrondo and Costa in \cite{AC}, Faenzi in \cite{Faenzi}, and Kuznetsov in \cite{Kuznetsov}. 
The bounded derived category $\mbox{D}(V_5)$ of coherent sheaves on $V_5$ was studied by Orlov in \cite{Orlov} where he proved that there are full exceptional collections in $\mbox{D}(V_5)$. 
From one of the exceptional collections and Bondal's work (cf. \cite{Bondal}), 
we see that the derived category of the finitely generated modules of the path algebra associated to the Kronecker quiver $\Gamma$ with 2 vertices and 3 arrows is embedded in $\mbox{D}(V_5)$. 
Using this semiorthogonal decomposition of the derived category of $V_5$, we obtain the following result  
(see Theorem \ref{moduli space of Ulrich} for more precise statement).

\begin{theo}
For any $r \geq 2,$ there is a nontrivial $(r^2+1)$-dimensional family of stable Ulrich bundles of rank $r$ on $V_5.$ 
Moreover, the moduli space of stable Ulrich bundles of rank $r$ on $V_5$ is isomorphic to a smooth open subset of the moduli space of stable quiver representations of the quiver $\Gamma$ with dimension vector $(r, r)$. 
In particular, $V_5$ is of Ulrich-wild representation type.
\end{theo}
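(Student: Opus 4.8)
The plan is to translate the geometric moduli problem into a problem about representations of the quiver $\Gamma$ coming from Orlov's exceptional collection, and then to read off smoothness and dimension from the homological algebra of $\Gamma$. First I would record the cohomological characterization of Ulrich bundles on $V_5 \subset \PP^6$: a rank $r$ bundle $E$ is Ulrich exactly when $H^\bullet(E(-1)) = H^\bullet(E(-2)) = H^\bullet(E(-3)) = 0$, which is equivalent to the Hilbert polynomial being $\chi(E(t)) = 5r\binom{t+3}{3}$ (the pushforward along a finite linear projection $\pi \colon V_5 \to \PP^3$ of degree $5$ is then $\cO_{\PP^3}^{\oplus 5r}$, using $\cO_{V_5}(1) = \pi^*\cO_{\PP^3}(1)$). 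This double role is important: the vanishing is what will place $E$ inside the quiver subcategory, while the Hilbert polynomial fixes $\ch(E)$ and hence, by Riemann--Roch on $V_5$ (with $-K_{V_5} = 2H$ and $H^3 = 5$), the Euler pairing $\chi(E,E)$.

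Next I would fix the admissible subcategory $\cA \subset \mathrm{D}(V_5)$ produced by Orlov's collection together with Bondal's theorem, so that $\cA \simeq \mathrm{D}^b(k\Gamma)$, where I expect $\Gamma$ to be the three-arrow Kronecker quiver (the three arrows being governed by the $\mathrm{PGL}_2$-symmetry of $V_5$). The crucial step is to show that the Ulrich vanishing makes a suitable twist of $E$ orthogonal to the exceptional objects lying outside $\cA$, so that $E \in \cA$, and moreover that the image $M$ of $E$ under the equivalence is a \emph{genuine} representation, i.e. a complex concentrated in cohomological degree $0$ rather than an arbitrary object of $\mathrm{D}^b(k\Gamma)$. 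Computing the graded pieces $\dim\Hom^\bullet(E_i,E)$ for the two generating exceptional objects then gives the dimension vector of $M$, which I expect to be $(r,r)$.

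Because $\cA$ is admissible and $E \in \cA$, one has $\Ext^i_{V_5}(E,E) = \Ext^i_{k\Gamma}(M,M)$ for all $i$. Since $\Gamma$ is acyclic, its path algebra is hereditary, so $\Ext^{\ge 2}_{k\Gamma}(M,M) = 0$ and therefore $\Ext^2_{V_5}(E,E) = 0$; this unobstructedness yields smoothness of the moduli space. (Independently, Serre duality and stability give $\Ext^3(E,E) = \Hom(E,E(-2))^\vee = 0$.) With $\Ext^0 = k$ from stability, the dimension at $[E]$ equals $\dim\Ext^1(E,E) = 1 - \chi(E,E)$, and the Kronecker Euler form $\chi\bl(r,r),(r,r)\br = r^2 + r^2 - 3r^2 = -r^2$ (equivalently the Riemann--Roch computation above) gives dimension $r^2+1$. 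To realize the moduli space as an open subset of the moduli of stable $\Gamma$-representations, I would match slope-/Gieseker-stability of $E$ with King $\theta$-stability of $M$ for the natural character, producing an open immersion.

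Finally, nonemptiness for every $r \ge 2$ follows from the representation theory of the (wild) three-arrow Kronecker quiver: stable representations of dimension vector $(r,r)$ exist in families of dimension $r^2+1 > 0$, and one checks that the corresponding object of $\cA$ is an honest Ulrich bundle (locally free, satisfying the required vanishing). The existence of such arbitrarily large families of stable Ulrich bundles then gives the wild representation type of $V_5$. I expect the main obstacle to be the second paragraph: rigorously proving that an Ulrich bundle lands in $\cA$ as a module concentrated in a single degree, pinning down the dimension vector, and verifying that the two stability notions coincide, so that the comparison of moduli spaces is an open immersion and not merely a bijection on points.
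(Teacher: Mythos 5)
Up to the existence step, your outline reproduces the paper's actual argument: the twist $E(-1)$ lies in $\langle U, Q^* \rangle \simeq \mbox{D}(\CC\Gamma\mbox{-mod})$ for the three-arrow Kronecker quiver; the resolution of the diagonal together with (semi)stability, Serre duality and Riemann--Roch identifies the image as $E(-1) = \Coker(U^{\oplus r} \to {Q^*}^{\oplus r})$, hence dimension vector $(r,r)$; heredity of $\CC\Gamma$ kills $\Ext^{\geq 2}$, the Euler form gives $\chi = r^2 + r^2 - 3r^2 = -r^2$ and so smooth dimension $r^2+1$; and King stability for $\theta = \chi(R_{\bullet},-)$, with $R_{\bullet}$ the projection of $\cO(2)$, is matched with slope stability of $E$. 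All of this is exactly the paper's route, and the point you flag as the ``main obstacle'' (concentration in degree $0$, the dimension vector, the two stabilities) is indeed where the paper's Proposition \ref{E(-1)} and the stability-comparison proposition do their work.

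The genuine gap is your nonemptiness argument. You propose to deduce existence of Ulrich bundles from the existence of stable $(r,r)$-representations of the wild quiver, saying ``one checks'' that the corresponding object of $\cA$ is an honest Ulrich bundle. But membership in $\langle U, Q^* \rangle$ only gives $H^{\bullet}(E(-1)) = H^{\bullet}(E(-2)) = 0$ for free; a stable $(r,r)$-representation yields an Ulrich bundle only when two further conditions hold: the map $U^{\oplus r} \to {Q^*}^{\oplus r}$ must be fiberwise injective with locally free cokernel (otherwise the object is not a shifted vector bundle at all), and $\Hom(R_{\bullet},V_{\bullet})$ must vanish, which is what forces the last vanishing $H^{\bullet}(E(-3))=0$: since $\chi(R_{\bullet},V_{\bullet})=0$ automatically, $H^3(E(-3))$ vanishes exactly when the induced map $H^3(U(-2))^{\oplus r} \to H^3(Q^*(-2))^{\oplus r}$ between two copies of $\CC^{5r}$ is an isomorphism. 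These are open conditions on $\mathcal M_{(r,r)}^{\theta\text{-s}}(\Gamma)$, but openness says nothing about nonemptiness, and abstract representation theory of $\Gamma$ cannot rule out that they fail identically; note also that the available maps form the constrained linear system $\Hom(U,Q^*) \otimes \CC^{r \times r}$ rather than all abstract bundle maps, so no Bertini or Thom--Porteous genericity is automatic here. The paper closes this hole in the opposite direction: it first proves existence of stable rank $r$ Ulrich bundles for every $r \geq 2$ by deforming extensions of the known rank $2$ (Beauville, \cite{Beauville}) and rank $3$ (Arrondo--Costa, \cite{AC}) Ulrich bundles, following the deformation argument of \cite{CHGS} as adapted in \cite{CKL}, and only then uses the resulting single Ulrich bundle of rank $r$ to conclude that a \emph{general} representation in the family satisfies the open conditions. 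Your plan therefore needs to be seeded with the low-rank existence results plus this deformation step; as written, the existence claim is unsupported, and any attempt to justify ``one checks'' by appealing to the identification with the Ulrich moduli space would be circular.
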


\bigskip

\noindent
\textbf{Conventions}.
We will work over $\CC.$ For a variety $X,$ we will use $\mbox{D}(X)$ to denote the bounded derived category of coherent sheaves on $X.$ 
For a quiver $\Gamma,$ we will consider only finite-dimensional representations of $\Gamma.$

\bigskip

\noindent
\textbf{Acknowledgments}. 
This work was motivated by talks given by Alexander Kuznetsov and Paolo Stellari. 
The first named author thanks to Alexander Kuznetsov and Paolo Stellari for kind explanations and motivating discussions. 
This paper also use many ideas and results from the earlier works (especially \cite{CKL, Faenzi, Kuznetsov}) mentioned above. 
The first named author thanks Mudumbai Seshachalu Narasimhan for helpful comments and encouragements, Yonghwa Cho and Yeongrak Kim for useful discussions during the preparation of \cite{CKL} and Ludmil Katzarkov and Simons Foundation for partially supporting this work via Simons Investigator Award-HMS 503875.
We thank Jaeyoo Choy, Rosa Maria Mir\'{o}-Roig and Se-jin Oh for helpful discussions and encouragements.
Finally, we would like to express our thanks to the referees for their comments and pointing out several mistakes that helped us to improve the paper. 

This work was partially supported by the Institute for Basic Science (IBS-R003-Y1) in Korea. 
The second author was partially supported by the Institute for Basic Science (IBS-R003-D1) and by the National Research Foundation of Korea (NRF) grant funded by the Korea government (MSIT) (NRF-2019R1A2C3010487). 

\bigskip

\section{Preliminaries}

In this section we briefly recall basic definitions and facts about Ulrich bundles, quiver representations and the Fano 3-fold $V_5.$

\subsection{Ulrich bundles}

Let us recall the definition of Ulrich bundles on smooth projective varieties.

\begin{defi}
Let $X\subset \mathbb P^N$ be a smooth projective variety of dimension $d.$ 
An \emph{Ulrich bundle} $E$ is a vector bundle on $X$ satisfying 
$$ H^i(X,E(-j))=0 $$
for all $i=0, 1, \cdots, d$ and $j=1, \cdots, d.$ 
Here, we denote the twisted bundle $E \otimes \mathcal O_X(-j)$ by $E(-j)$.
\end{defi}

Ulrich bundles enjoy many nice properties, and existence and moduli spaces of Ulrich bundles on a given variety tell us many things about the variety. 
See \cite{Beauville} and references therein for more details about Ulrich bundles.


For use in Section 3, let us recall the following result in \cite{CHGS} among properties of Ulrich bundles. 
We refer for instance to \cite{HL} for details about semistable sheaves.  

\begin{theo}[Theorem 2.9 of \cite{CHGS}]
\label{semistable bundle}
Let $E$ be an Ulrich bundle on a smooth projective variety $X.$ 
Then $E$ is a semistable bundle.
\end{theo}

\subsection{Quiver representations}

Let us recall some basic definitions and properties of quiver representations as follows. 
See \cite{CB} for more discussions about quiver representations.

\begin{defi}
A \emph{quiver} $\Gamma$ is a finite directed graph, i.e., $\Gamma=(\Gamma_0,\Gamma_1)$ consists of a finite set $\Gamma_0$ of vertices and a finite set $\Gamma_1$ of arrows (possibly multiple arrows and loops).
\end{defi}

For example, the \emph{Kronecker quiver} with $r$ arrows is a quiver having two vertices and $r$ arrows pointing in the same direction. 


\begin{defi}
Let $\Gamma$ be a quiver. 
A \emph{representation $V_{\Gamma}$ of the quiver $\Gamma$} is a collection of vector spaces $V_i$ for each vertex $i \in \Gamma_0$ 
and a collection of linear maps $f^V_{ij} \colon V_i \to V_j$ for each arrow $\rho_{ij} \in \Gamma_1.$
A \emph{subrepresentation} $W_{\Gamma}$ of $V_{\Gamma}$ is a quiver representation with injective linear maps $\iota_i \colon W_i \to V_i$ 
such that the following diagram commutes for each $\rho_{ij} \in \Gamma_1.$
\begin{displaymath}
\xymatrix{ 
W_i \ar[r]^{f^W_{ij}} \ar[d]_{\iota_i} & W_j \ar[d]^{\iota_j} \\
V_i \ar[r]^{f^V_{ij}} & V_j}
\end{displaymath}
\end{defi}

When we have a quiver $\Gamma$ and a sequence of integers $\underline{d}=(d_i)_{i \in \Gamma_0},$ 
it is an interesting task to classify $\Gamma$-representations $V_{\Gamma}$ 
which have a given dimension vector $\underline{d}^{V_{\Gamma}}=(d_i)_{i \in \Gamma_0},$ i.e., each vector space $V_i$ has dimension $d_i.$ 
Two standard ways will be constructing the moduli space of such representations using stacks or introducing certain stability conditions on the $\Gamma$-representations. 
Indeed, there are well-known stability conditions studied by King as follows (cf. \cite{King, Reineke}).

\begin{defi}
Let $\Gamma$ be a quiver and $\theta \colon \ZZ^{|\Gamma_0|} \to \ZZ$ be a $\ZZ$-linear map. 
Then a $\Gamma$-representation $V_{\Gamma}=(V_i,f_{ij})$ is \emph{$\theta$-(semi)stable} 
if $\theta(\underline{d}^{W_{\Gamma}}) < \theta(\underline{d}^{V_{\Gamma}})$ (respectively, $\theta(\underline{d}^{W_{\Gamma}}) \leq \theta(\underline{d}^{V_{\Gamma}})$) 
for every nonzero proper subrepresentation $W_{\Gamma}$ of $V_{\Gamma}.$
\end{defi}

Using the above $\theta$-stability condition, King constructed moduli spaces of $\theta$-semistable $\Gamma$-representations via Geometric Invariant Theory in \cite{King}. 
From now on, let us use $\mathcal M_{\underline{d}}^{{\theta}\text{-ss}}(\Gamma)$ (resp. $\mathcal M_{\underline{d}}^{{\theta}\text{-s}}(\Gamma)$) 
to denote the moduli space of $\theta$-semistable (resp. $\theta$-stable) $\Gamma$-representations with dimension vector $\underline{d}.$ 
Let us recall King's construction of $\mathcal M_{\underline{d}}^{{\theta}\text{-ss}}(\Gamma)$ and $\mathcal M_{\underline{d}}^{{\theta}\text{-s}}(\Gamma).$ 
(See \cite{King, Reineke} and references therein for more details.) 
Let $\cR_{\underline{d}}(\Gamma):=\bigoplus_{\rho_{ij} \in \Gamma_1} \Hom(V_i,V_j)$ be the vector space of $\Gamma$-representations with dimension vector $\underline{d}.$ 
For each vertex $i \in \Gamma_0,$ we have a national $\GL(d_i)$-action on $V_i$ and 
$G_{\underline{d}}:=\prod_{i \in \Gamma_0} \GL(d_i)$ acts on $\cR_{\underline{d}} (\Gamma)$ via the base change, i.e., $(g_i) \cdot (f_{ij}):=(g_j f_{ij} g_i^{-1}).$ 
Because the diagonal $\CC^*$-action on each $V_i$ acts trivially on $\cR_{\underline{d}} (\Gamma)$, 
we have a $G_0$-action on $\cR_{\underline{d}}(\Gamma)$ where $G_0=G_{\underline{d}} / \CC^*.$ 
Consider a trivial line bundle $L_0$ on $\cR_{\underline{d}}(\Gamma).$ 
For each $\theta \colon \ZZ^{|\Gamma_0|} \to \ZZ$ such that $\theta(\underline{d})=0,$ there is a corresponding $G_0$-linearization $\chi$ on $L_0.$ 
Let $\cR_{\underline{d}}^{{\theta}\text{-ss}}(\Gamma)$ (resp. $\cR_{\underline{d}}^{{\theta}\text{-s}}(\Gamma)$) be the set of semistable (resp. stable) points in $\cR_{\underline{d}}(\Gamma).$ 
King constructed the coarse moduli space $\mathcal M_{\underline{d}}^{{\theta}\text{-ss}}(\Gamma)$ as the GIT quotient $\cR_{\underline{d}}(\Gamma) \sslash_{\chi} G_0.$ 
Therefore, $\mathcal M_{\underline{d}}^{{\theta}\text{-ss}}(\Gamma)$ is the good quotient $\cR^{\theta\text{-ss}}_{\underline{d}}(\Gamma) \sslash_{\chi} G_0$ and 
$\mathcal M_{\underline{d}}^{{\theta}\text{-s}}(\Gamma)$ is the geometric quotient $\cR^{\theta\text{-s}}_{\underline{d}}(\Gamma) \sslash_{\chi} G_0.$

\begin{theo}[\cite{King}]
Fix $\Gamma$, $\underline{d}$ and $\theta$ as above. 
Then there exists a coarse moduli space $\mathcal M_{\underline{d}}^{{\theta}\text{-ss}}(\Gamma)$ (resp. $\mathcal M_{\underline{d}}^{{\theta}\text{-s}}(\Gamma)$) parametrizing S-equivalent classes of $\theta$-semistable (resp. $\theta$-stable) $\Gamma$-representations with dimension vector $\underline{d}$, 
which is an irreducible normal projective (resp. quasi-projective) variety.
\end{theo}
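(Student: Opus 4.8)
The plan is to realize the moduli space as a Geometric Invariant Theory (GIT) quotient, following King \cite{King}. First I would fix the affine space of all representations with the given dimension vector,
$$
R \defeq \mathrm{Rep}(\Gamma,\underline{d}) = \bigoplus_{\rho_{ij}\in\Gamma_1} \Hom(\CC^{d_i},\CC^{d_j}),
$$
on which the reductive group $G \defeq \prod_{i\in\Gamma_0} \mathrm{GL}(d_i,\CC)$ acts by simultaneous base change, $(g_i)\cdot (f_{ij}) = (g_j\, f_{ij}\, g_i^{-1})$. Isomorphism classes of $\Gamma$-representations of dimension $\underline{d}$ are exactly the $G$-orbits, and the diagonal one-parameter subgroup $\Delta\cong\CC^*$ acts trivially, so the genuine action is that of $G/\Delta$. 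Since $R$ is an affine space it is smooth and irreducible, and these two properties will be inherited by the quotient.

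Next I would introduce the linearization. To $\theta$ one associates the character $\chi_\theta\bl (g_i)\br = \prod_{i\in\Gamma_0} \det(g_i)^{\theta_i}$ of $G$; after the standard normalization (replacing $\theta$ within the equivalence class that leaves the stability condition unchanged) one may assume $\theta(\underline{d}^{V_\Gamma})=0$, so that $\chi_\theta$ is trivial on $\Delta$ and the condition of the Definition above reads $\theta(\underline{d}^{W_\Gamma})\le 0$ for every subrepresentation. I then equip the trivial line bundle on $R$ with the $G$-linearization twisted by $\chi_\theta$ and consider the associated GIT (semi)stable loci $R^{\mathrm{ss}}$ and $R^{\mathrm{s}}$.

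The crucial step is the equivalence between the representation-theoretic stability of the Definition and GIT (semi)stability for this linearization. This I would establish via the Hilbert--Mumford numerical criterion: a one-parameter subgroup $\lambda$ of $G$ amounts to a $\ZZ$-grading of each $V_i$, hence to a filtration of $V_\Gamma$ whose flat limit $\lim_{t\to 0}\lambda(t)\cdot V_\Gamma$ is the associated graded representation, and the Mumford weight $\mu(V_\Gamma,\lambda)$ computes, up to sign, the quantity $\theta$ of the dimension vectors of the subrepresentations appearing in the filtration. Matching the sign of $\mu$ for all $\lambda$ with the inequalities over all subrepresentations yields that $V_\Gamma\in R^{\mathrm{ss}}$ (resp. $R^{\mathrm{s}}$) precisely when $V_\Gamma$ is $\theta$-semistable (resp. $\theta$-stable); this is the main obstacle, as it requires the careful bookkeeping relating limits, filtrations, and weights. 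This is exactly Proposition~3.1 of \cite{King}.

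Finally I would invoke Mumford's GIT to pass to the quotient. The semistable quotient
$$
\mathcal M_{\underline{d}}^{{\theta}\text{-ss}}(\Gamma) = R /\!\!/_{\chi_\theta} G = \mathrm{Proj}\Bl \bigoplus_{n\ge 0} \CC[R]^{G,\,\chi_\theta^{\,n}} \Br
$$
is projective over $\Spec \CC[R]^{G}$; since the quiver $\Gamma$ arising here has no oriented cycles, $\CC[R]^{G}=\CC$ and the quotient is projective. Irreducibility and normality descend from $R$: the semistable locus is a nonempty open subset of the irreducible smooth $R$, and invariant subrings of normal rings are normal, so $\mathrm{Proj}$ of the ring of semi-invariants is an irreducible normal projective variety. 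The stable locus $\mathcal M_{\underline{d}}^{{\theta}\text{-s}}(\Gamma)$ is the open subvariety where $G/\Delta$ acts with finite stabilizers, giving a geometric quotient that is quasi-projective. To conclude I would verify the coarse moduli property by the universal mapping property of the GIT quotient, noting that stable points have stabilizer exactly $\Delta$, so their orbit space parametrizes isomorphism classes, while on the semistable locus the quotient parametrizes S-equivalence classes.
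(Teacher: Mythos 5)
The paper offers no proof of this statement---it is quoted directly from \cite{King}---and your proposal correctly reconstructs precisely the argument of the cited source: realize the moduli space as the GIT quotient of the affine representation variety $\mathrm{Rep}(\Gamma,\underline{d})$ by $\prod_i \mathrm{GL}(d_i)$ linearized by the character $\chi_\theta$, with the Hilbert--Mumford computation (King's Proposition 3.1) matching GIT (semi)stability to $\theta$-(semi)stability, and irreducibility, normality, projectivity and the coarse moduli property descending as you describe. Your remark that projectivity of the semistable quotient requires $\CC[R]^{G}=\CC$ (e.g.\ no oriented cycles, as holds for the two-vertex three-arrow quiver $\Gamma$ actually used in this paper) is a genuine subtlety that the paper's statement glosses over, and you handle it correctly.
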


When we have a quiver $\Gamma$, we also have an associated path algebra $\CC \Gamma$ and it is well-known that there is a natural equivalence between the category of quiver representations and the category of $\CC \Gamma$-modules. 
The following result is also well-known 
(see \cite{CB} for more details).

\begin{prop}[\cite{CB}] 
\label{path algebra and quiver representation}
Let $\Gamma$ be a quiver. 
\begin{enumerate}
\item 
For the path algebra $\CC \Gamma$ associated to a quiver $\Gamma$, let $\CC \Gamma${\rm-mod} be the category of $\CC \Gamma$-modules. 
Then the homological dimension of $\CC \Gamma${\rm-mod} is at most 1. 
In particular, $\CC \Gamma${\rm-mod} is a hereditary category.

\item 
Let $A_{\Gamma}$ (resp. $B_{\Gamma}$) be a finite-dimensional quiver representation of $\Gamma$, and let $\underline{a}$ (resp. $\underline{b}$) be its dimension vector. 
Then we have 
\begin{align*}
\chi(A_{\Gamma}, B_{\Gamma}) & = \dim \Hom(A_{\Gamma},B_{\Gamma}) - \dim \Ext^1(A_{\Gamma},B_{\Gamma}) \\
& = \sum_{i \in \Gamma_0} a_i b_i - \sum_{\rho_{ij} \in \Gamma_1} a_{i}b_{j} = \chi(\underline{a},\underline{b}).
\end{align*}
\end{enumerate}
\end{prop}

We can compute extensions between quiver representations using the above result. 
Moreover, using general arguments in homological algebra we have the following corollary.

\begin{coro}
Any element $F \in \mbox{\rm D}(\CC \Gamma\mbox{\rm-mod})$ in the derived category of $\CC \Gamma$-modules is isomorphic to $\bigoplus_i H^i(F)[-i].$
\end{coro}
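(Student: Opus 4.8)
The plan is to exploit the fact, just recorded in Proposition (1), that the module category $\CC\Gamma$-mod is hereditary, i.e. $\Ext^k(M,N)=0$ for all $k\geq 2$ and all finite-dimensional modules $M,N$. The assertion that every bounded complex splits as the direct sum of its shifted cohomologies is a general feature of the derived category of a hereditary abelian category, and I would establish it by induction on the cohomological amplitude of $F$.

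First I would reduce to a bounded complex: since we consider only finite-dimensional representations, the cohomology objects $H^i(F)$ are concentrated in finitely many degrees, say in the interval $[a,b]$. If $a=b$, then $F$ is quasi-isomorphic to a single module placed in degree $b$, so $F\cong H^b(F)[-b]$ and there is nothing to prove. For the inductive step I would invoke the canonical truncation triangle
$$\tau_{\leq b-1}F \to F \to \tau_{\geq b}F \to (\tau_{\leq b-1}F)[1].$$
Because $H^i(F)=0$ for $i>b$, the third term is $\tau_{\geq b}F\cong H^b(F)[-b]$, while $\tau_{\leq b-1}F$ has strictly smaller amplitude and hence, by the inductive hypothesis, is isomorphic to $\bigoplus_{i=a}^{b-1}H^i(F)[-i]$. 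The triangle then reads
$$\bigoplus_{i=a}^{b-1}H^i(F)[-i] \to F \to H^b(F)[-b] \xrightarrow{\delta} \bigoplus_{i=a}^{b-1}H^i(F)[-i+1].$$

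The crux is to show that the connecting morphism $\delta$ vanishes. It lies in
$$\Hom\Big(H^b(F)[-b],\ \bigoplus_{i=a}^{b-1}H^i(F)[-i+1]\Big)=\bigoplus_{i=a}^{b-1}\Ext^{b-i+1}\big(H^b(F),H^i(F)\big),$$
and for every $i\leq b-1$ the superscript satisfies $b-i+1\geq 2$, so each summand vanishes by the hereditary property from Proposition (1). Hence $\delta=0$. I would then use the standard fact that a distinguished triangle with zero connecting morphism splits, which yields $F\cong\bigoplus_{i=a}^{b-1}H^i(F)[-i]\oplus H^b(F)[-b]=\bigoplus_i H^i(F)[-i]$ and closes the induction.

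The only genuinely delicate point — the step I would take most care over — is the identification of $\delta$ with a tuple of classes in the higher Ext-groups $\Ext^{\geq 2}$, together with the attendant splitting of the triangle once $\delta=0$; the truncation formalism and the induction itself are routine.
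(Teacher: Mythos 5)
Your proof is correct and is precisely the ``general argument in homological algebra'' the paper invokes without spelling out: the standard induction on cohomological amplitude via the truncation triangle, with the connecting morphism landing in $\bigoplus_i \Ext^{b-i+1}(H^b(F),H^i(F))$, which vanishes by the hereditary property of $\CC\Gamma$-mod from the preceding proposition. Nothing further is needed; the identification of $\delta$ with higher Ext-classes and the splitting of a triangle with zero connecting map are both standard and handled correctly.
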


\subsection{Fano 3-fold $V_5$}

The Fano 3-fold $V_5$ is the unique smooth Fano 3-fold of Picard number 1, degree 5 and index 2. 
It is well-known that $V_5$ is a generic codimension 3 linear section of the Grassmannian $\Gr(2,5)$  of 2-dimensional subspaces in a 5-dimensional complex vector space $V$. 
The cohomology group of $V_5$ over $\ZZ$ is isomorphic to $\ZZ^4$ as an abelian group as follows.
\begin{align*}
 H^*(V_5,\ZZ) & = H^0(V_5,\ZZ) \oplus H^2(V_5,\ZZ) \oplus H^4(V_5,\ZZ) \oplus H^6(V_5,\ZZ) \\
& = \ZZ[V_5] \oplus \ZZ[h] \oplus \ZZ[l] \oplus \ZZ[p], 
\end{align*}
where $h \cdot l = p,$ $h^2=5 l$ and $h^3=5p.$ See \cite{Faenzi, IP, Kuznetsov, Orlov} for more details about the Fano 3-fold $V_5.$
Because $V_5$ is a linear section of codimension 3 in $\Gr(2,5)$, we have the following natural exact sequence of vector bundles on $V_5$:
$$ 0 \to U \to V \otimes \mathcal{O} \to Q \to 0, $$
where $U$ is the restriction of the universal 
subbundle $\cU$ on $\Gr(2,5)$ to $V_5$ and $Q$ is the restriction of the universal 
quotient bundle $\cQ$ on $\Gr(2,5)$ to $V_5.$ 
We can compute the Chern classes of $U,U^*,Q,Q^*$ as follows (cf. Section 3 of \cite{Faenzi}). \\
(1) The Chern classes of $U$ are $\rk(U)=2,$ $c_1(U)=-h$ and $c_2(U)=2l.$ \\
(2) The Chern classes of $U^*$ are $\rk(U^*)=2,$ $c_1(U^*)=h$ and $c_2(U^*)=2l.$ \\
(3) The Chern classes of $Q$ are $\rk(Q)=3,$ $c_1(Q)=h,$ $c_2(Q)=3l$ and $c_3(Q)=p.$ \\
(4) The Chern classes of $Q^*$ are $\rk(Q^*)=3,$ $c_1(Q^*)=-h,$ $c_2(Q^*)=3l$ and $c_3(Q^*)=-p.$ \\

Using the Borel--Weil--Bott theorem in \cite{Bott} and the Koszul resolution 
$$ 0 \to \cO(-3) \to \cO(-2)^{\oplus 3} \to \cO(-1)^{\oplus 3} \to \cO_{\Gr(2, 5)} \to \cO_{V_5} \to 0, $$
we can explicitly compute the cohomology groups $H^i(V_5,U(j))$ and $H^i(V_5,Q^*(j))$ for $j=-2,-1,0,1$. 
We can also prove the following lemma using the Riemann--Roch theorem 
since we know that the tautological bundles $U$ and $Q$ have vanishing intermediate cohomology from \cite{AC, Faenzi}. 
We call a vector bundle with this property \emph{arithmetically Cohen--Macaulay} (ACM).
For instance, since $Q^*(1)$ has Chern classes $c_1 = 2 h, c_2 = 8 l$ and $c_3 = 2 p$, 
the Euler characteristic of $Q^*(1)$ is equal to $10$ by the formula in Subsection 2.3 of \cite{AC}. 
From the fact that $Q^*(1)$ is ACM, this result implies $h^0(V_5, Q^*(1)) = 10$.

\begin{lemm}
\label{computation of cohomology}
The cohomology groups $H^i(V_5,U(j))$ and $H^i(V_5,Q^*(j))$ for $j=-2,-1,0,1$ are as follows.
\begin{align*}
& H^i(V_5,U(1))=\left \{ {\begin{array}{ll} 
\CC^5 & \textrm{if $i=0$,} \\ 
0 & \textrm{otherwise} 
\end{array}}
\right. \\
& H^i(V_5,Q^*(1))=\left \{ {\begin{array}{ll} 
\CC^{10} & \textrm{if $i=0$,} \\ 
0 & \textrm{otherwise} 
\end{array}}
\right. \\
& H^*(V_5,U) = H^*(V_5,U(-1)) = 0  \\
& H^*(V_5,Q^*) = H^*(V_5,Q^*(-1)) = 0 \\
& H^i(V_5,U(-2))=H^i(V_5,Q^*(-2))=\left \{ {\begin{array}{ll} 
\CC^5 & \textrm{if $i=3$,} \\ 
0 & \textrm{otherwise.} 
\end{array}}
\right.
\end{align*}
\end{lemm}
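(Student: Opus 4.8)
The plan is to reduce every computation on $V_5$ to one on $\Gr(2,5)$ via the Koszul resolution and then to feed each term into Borel--Weil--Bott. Since $V_5$ is a generic codimension $3$ linear section of $\Gr(2,5)$, it is the zero locus of a regular section of $\cO(1)^{\oplus 3}$, so $\cO_{V_5}$ has the Koszul resolution
\[ 0 \to \cO(-3) \to \cO(-2)^{\oplus 3} \to \cO(-1)^{\oplus 3} \to \cO \to \cO_{V_5} \to 0 \]
on $\Gr(2,5)$. Each bundle in the lemma is the restriction of an equivariant bundle: $U(j)=\cU(j)|_{V_5}$ and $Q^*(j)=\cQ^*(j)|_{V_5}$. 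Tensoring the resolution by $\cU(j)$ keeps it exact, and, writing $i\colon V_5\hookrightarrow\Gr(2,5)$ for the (closed, affine) immersion, it presents $i_*\,U(j)$ as a complex of equivariant bundles $\cU(j-k)^{\oplus\binom{3}{k}}$, $0\le k\le 3$. The hypercohomology spectral sequence with first page $H^q\bigl(\Gr(2,5),\cU(j-k)\bigr)^{\oplus\binom{3}{k}}$, the $k$-th term contributing in total degree $q-k$, then converges to $H^*(V_5,U(j))$; the same applies to $Q^*(j)$ using $\cQ^*(j-k)$.

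First I would compute the cohomology of $\cU(m)$ and $\cQ^*(m)$ on $\Gr(2,5)$ by Borel--Weil--Bott. Here $\cO(1)=\cE_{\omega_2}$ is the Pl\"ucker bundle, so twisting by $\cO(m)$ shifts the $P$-weight by $m\omega_2$; thus $\cU(m)=\cE_{\omega_1+(m-1)\omega_2}$ and $\cQ^*(m)=\cE_{(m-1)\omega_2+\omega_3}$. For each weight $\omega$ arising I would form $\omega+\rho$ with $\rho=\omega_1+\omega_2+\omega_3+\omega_4$, test singularity (orthogonality to a root of $\SL(5)$), and, when it is regular, find the unique $w$ making $w(\omega+\rho)$ dominant, reading off the degree $\ell(w)$ and the representation $V_G(w(\omega+\rho)-\rho)$. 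The rank-two identity $\cU(1)\cong\cU^*=\cE_{\omega_1}$ gives at once $H^0(\Gr,\cU(1))=V_G(\omega_1)=\CC^5$ with no higher cohomology, and $\cQ^*(1)=\cE_{\omega_3}$ gives $H^0(\Gr,\cQ^*(1))=V_G(\omega_3)=\CC^{10}$; the analogous low-twist computations should show that $\cU(0),\cU(-1)$ and $\cQ^*(0),\cQ^*(-1)$ are acyclic on $\Gr(2,5)$.

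Assembling the spectral sequence then yields the lemma. For $U,U(-1),Q^*,Q^*(-1)$ every term entering the resolution should be acyclic on $\Gr(2,5)$, forcing $H^*(V_5,-)=0$. For $U(1)$ and $Q^*(1)$ I expect a single nonzero $E_1$-term, in total degree $0$, coming from the $k=0$ column ($\CC^5$, resp.\ $\CC^{10}$); for $U(-2)$ and $Q^*(-2)$ a single nonzero term in total degree $3$, coming from the deepest Koszul term (the $\cO(-3)$ column) via top-degree cohomology on $\Gr(2,5)$. In each case a single surviving term forces degeneration at $E_1$, so no differentials intervene and the stated ranks follow. A clean independent check on the $j=-2$ cases is Serre duality: since $\omega_{V_5}=\cO(-2)$, one has $H^i(V_5,U(-2))^\vee\cong H^{3-i}(V_5,U^*)=H^{3-i}(V_5,U(1))$ and $H^i(V_5,Q^*(-2))^\vee\cong H^{3-i}(V_5,Q)$, the latter computed from $0\to U\to V\otimes\cO\to Q\to 0$ together with $H^*(V_5,U)=0$ and $H^*(V_5,\cO)=\CC$. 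The main obstacle is purely the Borel--Weil--Bott bookkeeping: converting each twist into a $P$-weight and verifying, twist by twist, the singular/regular dichotomy so that exactly one term contributes in each nonvanishing case and none in the vanishing ones. Once that is done the computation is immediate.
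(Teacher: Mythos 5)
Your proposal is correct and follows essentially the same route as the paper: tensor the Koszul resolution of $\cO_{V_5}$ on $\Gr(2,5)$ with the equivariant twists $\cU(j)$ and $\cQ^*(j)$, compute $H^*\bigl(\Gr(2,5),\cU(m)\bigr)$ and $H^*\bigl(\Gr(2,5),\cQ^*(m)\bigr)$ for $-5\le m\le 1$ by Borel--Weil--Bott (your weight bookkeeping $\cU(m)=\cE_{\omega_1+(m-1)\omega_2}$, $\cQ^*(m)=\cE_{(m-1)\omega_2+\omega_3}$ matches the paper's, and in each case at most one term survives, forcing degeneration), and read off the cohomology on $V_5$. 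The only cosmetic differences are that you organize the four-term complex via the hypercohomology spectral sequence where the paper splits the exact sequence by hand, and your Serre duality cross-check of the $j=-2$ cases is a sound addition not present in the paper.
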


\vskip 1em 

Orlov proved that there are full exceptional collections of length 4 on $V_5$.

\begin{prop}[Theorem 1 of \cite{Orlov}] \label{derived category}
There are following full exceptional collections in the derived category $\mbox{\rm  D}(V_5)$ of coherent sheaves on $V_5$.
$$ \mbox{\rm  D}(V_5) = \langle U, Q^*, \mathcal{O}, \mathcal{O}(1) \rangle 
= \langle \mathcal{O}(-2), \mathcal{O}(-1), (V/U)(-1), U \rangle $$
\end{prop}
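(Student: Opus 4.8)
The plan is to verify the three defining properties of a full exceptional collection separately: exceptionality of each object, semiorthogonality of each ordered pair, and generation of the whole category. For the first two properties I would reduce everything to cohomology on $V_5$ via $\Ext^\bullet(E_i,E_j)=H^\bullet(V_5, E_i^\vee \otimes E_j)$, where each tensor product $E_i^\vee\otimes E_j$ is a direct sum of bundles built from $U, U^*, Q, Q^*$ and their twists by $\cO(k)$. Every such cohomology group can be computed by the mechanism already used in Lemma \ref{computation of cohomology}: pull the bundle back from $\Gr(2,5)$, resolve $\cO_{V_5}$ by the Koszul complex, and apply the Borel--Weil--Bott theorem to the resulting equivariant bundles $\cE(k)$ on $\Gr(2,5)$. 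For the collection $\langle U, Q^*, \cO, \cO(1)\rangle$ several of the required vanishings, namely $H^\bullet(V_5,U)=H^\bullet(V_5,U(-1))=H^\bullet(V_5,Q^*)=H^\bullet(V_5,Q^*(-1))=0$, are already recorded in Lemma \ref{computation of cohomology}; the remaining pairs, such as $\Ext^\bullet(Q^*,U)=H^\bullet(V_5, Q\otimes U)$, together with the self-$\Ext$ groups giving exceptionality, are handled by the same Koszul--Bott routine after first decomposing the relevant tensor products into irreducible equivariant summands on $\Gr(2,5)$.

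For fullness I would argue through the ambient Grassmannian rather than directly. Let $i\colon V_5\hookrightarrow \Gr(2,5)$ be the inclusion, and recall that $\Gr(2,5)$ carries Kapranov's full exceptional collection of Schur bundles $\Sigma^\lambda\cU^*$, with $\lambda$ running over partitions in a $2\times 3$ box. If $F\in \mbox{D}(V_5)$ is right-orthogonal to all restrictions $i^*\Sigma^\lambda\cU^*$, then by the adjunction $i^* \dashv i_*$ we get $\Hom_{\Gr}(\Sigma^\lambda\cU^*, i_* F[k])=\Hom_{V_5}(i^*\Sigma^\lambda\cU^*, F[k])=0$ for all $\lambda$ and $k$, so fullness of Kapranov's collection forces $i_* F=0$, whence $F=0$ because $i_*$ is exact and faithful for a closed immersion. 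Thus the restricted bundles $i^*\Sigma^\lambda\cU^*$ already generate $\mbox{D}(V_5)$, and it remains to show that each of them lies in the triangulated subcategory generated by $\{U, Q^*, \cO, \cO(1)\}$. This I would accomplish by repeatedly breaking the Schur bundles using the restricted tautological sequence $0\to U\to V\otimes\cO\to Q\to 0$ and its symmetric and exterior powers, together with the identities $\det U^*=\cO(1)$ and $Q\cong V/U$, rewriting every $i^*\Sigma^\lambda\cU^*$ as an iterated extension of twists of the four chosen objects.

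Finally, to obtain the second collection $\langle \cO(-2), \cO(-1), (V/U)(-1), U\rangle$ I would connect it to the first by a sequence of mutations, which preserve both exceptionality and fullness; note that both collections have length $4=\rk K_0(V_5)$, matching $H^*(V_5,\ZZ)\cong\ZZ^4$, and the mutations are read off from the same short exact sequences among $\cO, \cO(1), U, Q$ used above. I expect the genuine obstacle to be the fullness argument, and more precisely the bookkeeping of its last step: verifying that the complete set of restricted Kapranov bundles can be assembled from the four objects requires controlling the restriction and twisting of several Schur functors of $U$ and $U^*$, whereas exceptionality and semiorthogonality are finite and essentially mechanical once the Borel--Weil--Bott bookkeeping of Lemma \ref{computation of cohomology} is in place.
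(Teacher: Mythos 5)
The paper does not actually prove this proposition: it is imported verbatim as Theorem 1 of \cite{Orlov}, so your proposal is being measured against Orlov's argument and against the tools the paper itself quotes (notably Faenzi's resolution of the diagonal, Proposition \ref{resolution of diagonal}). The first half of your plan is sound and matches the paper's toolkit: exceptionality and semiorthogonality reduce to finitely many groups $H^\bullet(V_5, E_i^\vee\otimes E_j)$, each computable by pulling back to $\Gr(2,5)$, resolving $\cO_{V_5}$ by the Koszul complex, and applying Borel--Weil--Bott, exactly as in Lemma \ref{computation of cohomology}. Your reduction of fullness is also structurally valid: if $F$ is right-orthogonal to all $i^*\Sigma^\lambda\cU^*$, then by $i^*\dashv i_*$ and fullness of Kapranov's collection $i_*F=0$, hence $F=0$ since $i_*$ is conservative for a closed immersion; combined with admissibility of $\cT=\langle U,Q^*,\cO,\cO(1)\rangle$ (so that $\cT^\perp=0$ forces $\cT=\mbox{D}(V_5)$), it suffices to place each restricted Kapranov bundle inside $\cT$.

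The genuine gap is in that last step, which is the entire content of fullness and which you both leave as a sketch and misphrase: you say you will write each $i^*\Sigma^\lambda\cU^*$ as an iterated extension of \emph{twists} of the four chosen objects, but twisting is not permitted --- membership must be in the subcategory generated by $U, Q^*, \cO, \cO(1)$ on the nose, since $\{\cO(k)\}_{k\in\ZZ}$ alone already generates $\mbox{D}(V_5)$. And the obvious resolutions do not close up by themselves: for instance resolving $S^2U^*$ via $0\to \wedge^2 Q^*\to Q^*\otimes V^*\to S^2V^*\otimes\cO\to S^2U^*\to 0$ introduces $\wedge^2Q^*\cong Q(-1)$, while the box also contains $\cO(2)$, $\cO(3)$, $U(2)$, $U(3)$, $S^2U^*(1)$, each requiring its own nontrivial monad-type presentation (compare the paper's Lemma 3.5, where $\cO(2)=\Coker(U^{\oplus 5}\to Q^{*\oplus 10})$ is \emph{derived from} fullness, so invoking such presentations here would be circular). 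A much shorter route, available inside the paper itself, is to bypass Kapranov entirely: the second diagonal resolution of Proposition \ref{resolution of diagonal}, $0\to U(-1)\boxtimes U\to Q^*(-1)\boxtimes Q^*\to \cO(-1)\boxtimes \Omega^1_{\PP^6}(1)|_{V_5}\to \cO\to\cO_\Delta\to 0$, gives by the standard Beilinson convolution argument that every object lies in the subcategory generated by $U(-1), Q^*(-1), \cO(-1), \cO$; twisting by $\cO(1)$ yields fullness of $\langle U,Q^*,\cO,\cO(1)\rangle$ in one stroke, and the second collection then follows by the mutations you indicate. As it stands, your proposal establishes everything except fullness, whose decisive bookkeeping is unproven.
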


\vskip 1em

From Bondal's theorem in \cite{Bondal}, 
we see that the triangulated subcategory $\langle U, Q^* \rangle$ is equivalent to the derived category $\mbox{D}(\CC \Gamma\mbox{-mod})$ of $\CC \Gamma$-modules, 
where $\Gamma$ is the Kronecker quiver with two vertices and three arrows between them. 
From now on $\Gamma$ will denote this specific quiver. See \cite{Kuznetsov, Orlov} for more details. \\

Let us consider the following diagram with two projections $p$ and $q$.

\begin{displaymath}
\xymatrix{ 
 & V_5 \times V_5 \ar[ld]_p \ar[rd]^q & \\
V_5 & & V_5}
\end{displaymath}

Faenzi obtained a resolution of the diagonal over $V_5 \times V_5$.

\begin{prop}[Theorem 4.1 and Corollary 5.4 of \cite{Faenzi}] \label{resolution of diagonal}
There is a resolution of the diagonal over $V_5 \times V_5$ as follows.
$$ 0 \to \cO(-1,-1) \to U \boxtimes \wedge^2 Q^* \to Q^* \boxtimes U \to \cO \to \cO_{\Delta} \to 0 $$
By mutation of the exceptional collection, we have another resolution 
$$ 0 \to U(-1) \boxtimes U \to Q^*(-1) \boxtimes Q^* \to \cO(-1) \boxtimes \Omega^1_{\PP^6}(1)|_{V_5} \to \cO \to \cO_{\Delta} \to 0 .$$
\end{prop}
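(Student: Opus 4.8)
The plan is to obtain both resolutions from the Beilinson-type construction of the diagonal attached to a full exceptional collection, working intrinsically on $V_5$ rather than by restriction from $\Gr(2,5)$. It is worth recording why restriction fails: on $\Gr(2,5)$ the diagonal is the zero scheme of the tautological section of $\cU^\vee \boxtimes \cQ$, a bundle of rank $6 = \dim \Gr(2,5)$, so its Koszul complex resolves $\cO_{\Delta_{\Gr}}$. However, $V_5 \times V_5$ has codimension $6$ in $\Gr(2,5) \times \Gr(2,5)$, so the intersection $\Delta_{\Gr} \cap (V_5 \times V_5) = \Delta_{V_5}$ has expected dimension $6 + 6 - 12 = 0$ against actual dimension $3$. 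This intersection is far from proper, the restricted Koszul complex is not a resolution, and one is forced to argue on $V_5$ itself.

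First I would invoke the standard fact that a full exceptional collection $(E_0, E_1, E_2, E_3)$ on a smooth projective threefold gives a resolution of $\cO_\Delta$ on the product with $\dim + 1 = 4$ terms, each term being a box product pairing a member of the collection on one factor with the corresponding member of the dual collection on the other; the dual collection $(F_i)$ is characterized by the orthogonality $\Ext^k_{V_5}(E_i, F_j) = \CC$ when $i = j, \ k = 0$ and $0$ otherwise. Starting from Orlov's collection $\langle U, Q^*, \cO, \cO(1) \rangle$ of Proposition \ref{derived category}, the key task is to compute this dual collection explicitly. Using the tautological sequence $0 \to U \to V \otimes \cO \to Q \to 0$, the identifications $\wedge^2 U \cong \cO(-1)$ and $\wedge^2 Q^* \cong Q(-1)$, and the cohomology vanishing of Lemma \ref{computation of cohomology} (supplemented by the analogous computations for $U^* \otimes Q^*$ and $\wedge^2 Q^*$), I would verify the defining $\Ext$-orthogonality and identify the dual objects as $\cO$, $U$, $\wedge^2 Q^*$, $\cO(-1)$. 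Assembling the four terms $\cO(-1,-1)$, $U \boxtimes \wedge^2 Q^*$, $Q^* \boxtimes U$, $\cO$, with the connecting morphisms coming from the canonical evaluation maps, then yields the first resolution.

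For the second resolution I would pass to Orlov's other collection $\langle \cO(-2), \cO(-1), (V/U)(-1), U \rangle$ by a sequence of mutations. A mutation of the exceptional collection induces the corresponding mutation of its dual, hence a new resolution of the same $\cO_\Delta$ whose terms are the mutated box products; tracking the mutation produces the twists $U(-1), Q^*(-1), \cO(-1)$ on the first factor and the bundle $\Omega^1_{\PP^6}(1)|_{V_5}$ in the third term, giving the stated complex $0 \to U(-1) \boxtimes U \to Q^*(-1) \boxtimes Q^* \to \cO(-1) \boxtimes \Omega^1_{\PP^6}(1)|_{V_5} \to \cO \to \cO_\Delta \to 0$.

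The main obstacle is the degeneration step: the Beilinson construction a priori only produces an object of $\mbox{D}(V_5 \times V_5)$ with $\cO_\Delta$ as its total cohomology, and one must check that the associated complex has cohomology concentrated in a single degree, so that it is a genuine resolution by the displayed sheaves rather than a more complicated complex. This reduces to verifying enough $\Ext$-vanishing between the box-product terms, where the strong exceptionality of the collection and the precise cohomology groups of Lemma \ref{computation of cohomology} are essential; the combinatorial bookkeeping for the mutations producing the second resolution is the secondary technical point.
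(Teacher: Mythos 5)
The paper does not actually prove this proposition: it is imported wholesale from Faenzi (Theorem 4.1 and Corollary 5.4 of \cite{Faenzi}), so your argument is necessarily a reconstruction rather than a variant of an in-paper proof. That said, your route --- the Beilinson-type resolution of $\cO_\Delta$ attached to Orlov's collection $\langle U, Q^*, \cO, \cO(1)\rangle$ of Proposition \ref{derived category}, identification of the dual collection, then a mutation to produce the second complex --- is the standard machinery and is essentially how the cited source derives these resolutions. Two of your observations are exactly right and worth keeping: first, that restriction from the Grassmannian fails, since $\Delta_{\Gr}$ is the zero locus of the canonical section of the rank-$6$ bundle $\cU^\vee\boxtimes\cQ$ while $\Delta_{V_5}\subset V_5\times V_5$ has codimension $3$, so the restricted Koszul complex is not exact; second, that fullness of the collection a priori only yields a Postnikov filtration of $\cO_\Delta$ with box-product factors, so the real work is the purity/degeneration check making the four-term complex an honest sheaf resolution.

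One step is wrong as stated and must be repaired before the bookkeeping can be done: your normalization of the dual collection. With $(E_0,\dots,E_3)=(U,Q^*,\cO,\cO(1))$ and your claimed duals $(\cO, U, \wedge^2Q^*, \cO(-1))$, the condition $\Ext^k(E_i,F_j)=\CC\,\delta_{ij}\delta_{k0}$ fails both on and off the diagonal: $\Ext^\bullet(U,\cO)\cong H^\bullet(V_5,U(1))=\CC^5$ in degree $0$ by Lemma \ref{computation of cohomology}; $\Ext^\bullet(E_0,F_1)=\Ext^\bullet(U,U)=\CC\neq 0$ although $0\neq 1$; and $\Ext^\bullet(\cO(1),\cO(-1))\cong H^3(V_5,\cO(-2))\cong\CC$ sits in degree $3$, not $0$. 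The correct formulation pairs the helixed collection $(\cO(-1),U,Q^*,\cO)$ --- the left-hand factors of the first resolution, obtained by moving $\cO(1)$ to the front, which twists it by $\omega_{V_5}=\cO(-2)$ --- against the right-hand factors $(\cO(-1),\wedge^2Q^*,U,\cO)$, with the dual objects carrying shifts: the required orthogonality is that $H^\bullet(V_5, A_i\otimes B_j)$ vanishes for $i\neq j$ and for $i=j$ is $\CC$ concentrated in the degree equal to the homological position of the term $A_i\boxtimes B_i$ in the complex (compare $H^\bullet(\cO)=\CC$ in degree $0$ versus $H^\bullet(\cO(-2))=\CC$ in degree $3$ at the two ends). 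With this ordering-and-shift convention fixed, the objects you listed are indeed the ones occurring, the evaluation maps assemble as you say, and your plan --- including the purity check and the mutation step, where the third term of the second resolution comes from the restricted Euler sequence $0\to\Omega^1_{\PP^6}(1)|_{V_5}\to\cO^{\oplus 7}\to\cO(1)\to 0$ --- goes through along the lines of Faenzi's original argument.
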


The above resolution will be very useful to compute projection functors.

\bigskip

\section{Ulrich bundles on $V_5$}

In this section we discuss existence and moduli spaces of rank $r$ Ulrich bundles on $V_5$ for any $r \geq 2.$ 

\subsection{Generalities on Ulrich bundles on $V_5$}

First, it is easy to see that there is no Ulrich line bundle on $V_5.$
Moreover, we can see that $(-1)$-twists of Ulrich bundles lie on the specific semiorthogonal component.

\begin{lemm}
Let $E$ be an Ulrich bundle on $V_5.$ Then $E(-1) \in \langle U, Q^* \rangle.$
\end{lemm}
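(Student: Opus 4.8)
The plan is to use the full exceptional collection $\mathrm{D}(V_5) = \langle U, Q^*, \mathcal{O}, \mathcal{O}(1) \rangle$ of Proposition \ref{derived category} and to recognize $\langle U, Q^* \rangle$ as the subcategory right-orthogonal to $\langle \mathcal{O}, \mathcal{O}(1) \rangle$. In this semiorthogonal decomposition the only $\Hom$'s forced to vanish are those running from the right-hand factor to the left-hand one, i.e. $\Hom^\bullet(\mathcal{O}(k), U) = \Hom^\bullet(\mathcal{O}(k), Q^*) = 0$ (these are exactly the cohomology vanishings recorded in Lemma \ref{computation of cohomology}). Hence $\langle U, Q^* \rangle = \langle \mathcal{O}, \mathcal{O}(1) \rangle^{\perp}$, so an object $F$ lies in $\langle U, Q^* \rangle$ if and only if $\Ext^i(\mathcal{O}, F) = 0$ and $\Ext^i(\mathcal{O}(1), F) = 0$ for all $i$. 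Since $\mathcal{O}$ and $\mathcal{O}(1)$ generate $\langle \mathcal{O}, \mathcal{O}(1) \rangle$ as a triangulated category, testing against these two generators suffices, and I would apply this criterion to $F = E(-1)$.

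The second step is to rewrite these $\Ext$ groups as sheaf cohomology of twists of $E$. Because $E(-1)$ is an honest sheaf placed in degree $0$, one has $\Ext^i(\mathcal{O}, E(-1)) = H^i(V_5, E(-1))$ and $\Ext^i(\mathcal{O}(1), E(-1)) = \Ext^i(\mathcal{O}, E(-2)) = H^i(V_5, E(-2))$ for every $i$. Now I invoke the defining vanishing of an Ulrich bundle: on the threefold $V_5$ one has $H^i(V_5, E(-j)) = 0$ for all $i$ and all $j \in \{1,2,3\}$. Taking $j = 1$ and $j = 2$ gives precisely $H^\bullet(V_5, E(-1)) = 0$ and $H^\bullet(V_5, E(-2)) = 0$, so both orthogonality conditions hold and $E(-1) \in \langle U, Q^* \rangle$.

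The computation itself is immediate; the only point that needs care is the bookkeeping of the semiorthogonal decomposition, namely fixing the direction of the orthogonality so that membership in $\langle U, Q^* \rangle$ is tested by $\Ext^\bullet(\mathcal{O}(k), E(-1))$ rather than by $\Ext^\bullet(E(-1), \mathcal{O}(k))$. The latter would instead force the vanishing of $H^\bullet(V_5, E(-4))$ via Serre duality and $\omega_{V_5} = \mathcal{O}(-2)$, which fails since $\chi(E(-4)) = -5r \neq 0$; this sign check is a convenient sanity test confirming that we are working on the correct side. In particular no Serre duality or Chern-class input is needed for the actual argument, as the two Ulrich vanishings for $j = 1, 2$ are exactly the right-orthogonality of $E(-1)$ to $\mathcal{O}$ and $\mathcal{O}(1)$.
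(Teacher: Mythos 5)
Your proposal is correct and follows essentially the same route as the paper: the paper's proof likewise invokes the full exceptional collection $\langle U, Q^*, \mathcal{O}, \mathcal{O}(1) \rangle$ and the Ulrich vanishings $H^*(V_5, E(-j)) = 0$ to place $E(-1)$ in $\langle U, Q^* \rangle$. If anything, you are more careful than the paper, which quotes all three vanishings $j = 1, 2, 3$ where only $j = 1, 2$ are needed for right-orthogonality to $\mathcal{O}$ and $\mathcal{O}(1)$; your sign check via Serre duality with $\omega_{V_5} = \mathcal{O}(-2)$ and $\chi(E(-4)) = -5r \neq 0$ correctly confirms the direction of the orthogonality.
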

\begin{proof}
By the definition of Ulrich bundles, we see that 
$$H^*(V_5,E(-1)) = H^*(V_5,E(-2)) = 0.$$ 
Hence we have that $E(-1) \in \langle U, Q^* \rangle$ by Proposition \ref{derived category}.
\end{proof}
Therefore, we can study Ulrich bundles on $V_5$ in terms of $U, Q^*.$

\subsection{Rank 2 and 3 cases}

Beauville proved that seven families of Fano 3-folds of even index 
admit Ulrich bundles of rank 2 and computed their deformations via Serre correspondence.

\begin{prop}[Proposition 6.1 and Proposition 6.4 of \cite{Beauville}]
\label{rank 2 Ulrich}
There are Ulrich bundles of rank 2 on $V_5$, and the moduli space of rank 2 special Ulrich bundles is smooth of dimension 5.
\end{prop}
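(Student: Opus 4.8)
The plan is to quote the two results of Beauville directly, since the proposition as stated is essentially a citation, and then to spend the proof explaining how each half follows from the Serre correspondence on $V_5$. For the existence of a rank $2$ Ulrich bundle $E$, I would first record what an Ulrich bundle of rank $2$ must look like numerically: from the definition the Hilbert polynomial of $E$ is pinned down, so $E$ has $c_1(E)=3h$ (equivalently $\det E = \cO(3)$, matching $c_1=rh$ with $r=2$ on a variety of index $2$ and degree $5$) and a determined $c_2$. The strategy is then to produce $E$ as a Serre extension associated to a smooth curve $C \subset V_5$: one chooses $C$ so that $E$ sits in a short exact sequence
\begin{equation}
0 \to \cO_{V_5} \to E(-1) \to \cI_C(1) \to 0, \label{serre}
\end{equation}
and one checks that the cohomological vanishings defining ``Ulrich'' translate into numerical and cohomological conditions on $C$ (its degree and arithmetic genus, together with the vanishing of certain $H^i(\cI_C(j))$). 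Beauville verifies that a general such $C$ exists and that the resulting $E$ is an Ulrich bundle; I would reproduce that verification, using the Koszul-type sequences and the cohomology computations already available in Lemma \ref{computation of cohomology}.

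For the smoothness and dimension statement, the key point is a standard deformation-theoretic computation. The tangent space to the moduli space at $[E]$ is $\Ext^1(E,E)$, and smoothness of dimension $5$ will follow once I show $\Ext^1(E,E)$ has dimension $5$ and that the obstruction space $\Ext^2(E,E)$ vanishes (or at least that the moduli space is smooth at $[E]$). First I would reduce to computing $\Ext^i(E,E)$ via the exact sequence \eqref{serre}: twisting and taking $\Hom(E,-)$ and $\Hom(-,E)$ relates $\Ext^\bullet(E,E)$ to cohomology groups of $E(-1)$, $E(j)$ and of the ideal sheaf $\cI_C$. Because $E$ is simple (being stable, as one checks), $\Hom(E,E)=\CC$ and $\Ext^3(E,E) \cong \Hom(E,E\otimes\omega_{V_5})^\vee = \Hom(E,E(-2))^\vee$, which vanishes since $E$ is Ulrich; together with the Euler characteristic $\chi(E,E)$ computed from Chern classes via Riemann-Roch, the numbers $\dim\Ext^1 - \dim\Ext^2$ are determined, and the special (i.e.\ with the expected $\det$) hypothesis forces the count to give exactly $5$.

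The main obstacle, and the step that carries the real content, is controlling the obstruction group $\Ext^2(E,E)$ and thereby establishing smoothness rather than just computing the expected dimension. By Serre duality $\Ext^2(E,E)\cong \Ext^1(E,E(-2))^\vee = \Ext^1(E,E\otimes\omega_{V_5})^\vee$, so I would aim to show this vanishes; the trace map splits $\Ext^i(E,E)$ into $H^i(\cO_{V_5})$ and a traceless part $\Ext^i(E,E)_0$, and since $H^2(\cO_{V_5})=0$ it suffices to kill the traceless piece. This is exactly where the geometry of $V_5$ and the precise choice of the curve $C$ enter: one must use the sequence \eqref{serre} to express $\Ext^2(E,E)$ in terms of cohomology of $\cI_C$ twists and then verify the relevant group vanishes for a \emph{general} Serre curve. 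I expect this vanishing — equivalently, the unobstructedness of the family — to be the delicate point, and it is precisely the content that Beauville supplies; in the write-up I would cite \cite{Beauville} for it while indicating the Serre-correspondence mechanism above, so that the dimension count $\dim\Ext^1(E,E)_0 = 5$ and the smoothness assertion are both accounted for.
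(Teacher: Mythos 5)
You have correctly identified the genre of this statement: the paper itself gives \emph{no} proof whatsoever — the proposition is a straight citation of Propositions 6.1 and 6.4 of \cite{Beauville} — so quoting Beauville while explaining the Serre-correspondence mechanism is the right thing to attempt, and your deformation-theoretic outline ($\Hom(E,E)=\CC$ by stability, $\Ext^3(E,E)\cong\Hom(E,E(-2))^\vee=0$ by Serre duality and the Ulrich condition, the dimension extracted from $\chi(E,E)$, smoothness from vanishing of the obstruction space $\Ext^2(E,E)$, with the delicate vanishing credited to Beauville) is in fact how Beauville's Proposition 6.4 runs.

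However, your numerical setup is wrong, and wrong in a way that breaks the existence half at its first step. On $V_5$ (index $2$, degree $5$, Picard rank $1$) the Ulrich conditions force $c_1(E)=rh$, so for $r=2$ one has $\det E=\cO(2)$, not $\cO(3)$; your own parenthetical ``matching $c_1=rh$ with $r=2$'' already contradicts your asserted $c_1(E)=3h$, so the proposal is internally inconsistent. Worse, the proposed extension $0\to\cO\to E(-1)\to\cI_C(1)\to 0$ cannot exist for any curve $C$: the vanishing $H^0(V_5,E(-1))=0$ is part of the \emph{definition} of an Ulrich bundle, so $E(-1)$ has no nonzero section to induce such a sequence (and its determinant $\cO(1)$ is again the wrong one). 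The correct Serre correspondence uses a general section of $E$ itself, where $h^0(E)=\chi(E)=r\cdot\deg V_5=10$, giving $0\to\cO\to E\to\cI_C(2)\to 0$ with $C$ an elliptic curve (since $\omega_C\cong\cO_C(c_1(E)+K_{V_5})\cong\cO_C$) of degree $c_2(E)=7$; this is exactly Beauville's construction from elliptic curves of degree $d+2$. With the corrected Chern classes $c_1(E)=2h$, $c_2(E)=7l$ one computes $\chi(E,E)=-4$, so $\dim\Ext^1(E,E)=5$ once $\Ext^2(E,E)=0$, consistent with the Hilbert-scheme count $14-\dim\PP H^0(E)=14-9=5$ and with the paper's later quiver computation giving $r^2+1$. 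So: right route in outline, but as written the construction would fail; fix the twist and the determinant, and your sketch does reproduce Beauville's argument.
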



Arrondo and Costa gave an explicit Ulrich bundle of rank 3 on $V_5$ in \cite{AC}. 
As obtained in Example 4.4 of \cite{AC}, the symmetric square $\Sym^2 U^*$ of the dual bundle $U^*$ is an ACM bundle 
and its Chern classes are $c_1 = 3 h, c_2 = 18 l$ and $c_3 = 8p$. 
Then we get $h^0(V_5, \Sym^2 U^*) = 15$ by the Riemann--Roch theorem, 
which implies that $\Sym^2 U^*$ is an Ulrich bundle since $h^0(V_5, \Sym^2 U^*) = \deg(V_5) \cdot \rk(\Sym^2 U^*)$.
On the other hand, this result can be deduced from the fact that the restriction of an Ulrich bundle to a general hyperplane section is Ulrich 
because the equivariant vector bundle $\Sym^2 \mathcal U^*$ is an Ulrich bundle on the Grassmannian $\Gr(2,5)$ by the Borel--Weil--Bott theorem.


\begin{prop}[Section 4 of \cite{AC}] 
\label{rank 3 Ulrich}
There is an Ulrich bundle of rank 3 on $V_5.$
\end{prop}

Faenzi \cite{Faenzi} also studied ACM bundles on $V_5$ and classified semistable ACM bundles on $V_5$ up to rank 3. 

\subsection{Higher rank cases}

Suppose that there exists an Ulrich bundle of rank $r$ on $V_5.$ Because we know that $E(-1)$ lies on $\langle U, Q^* \rangle$, we can express $E(-1)$ in terms of $U$ and $Q^*.$ Let us compute the image of $E(-1)$ in $\langle U, Q^* \rangle$ in terms of quiver representations as follows.

\begin{prop} \label{E(-1)}
For any $r \geq 2,$ an Ulrich bundle $E$ of rank $r$ on $V_5$ corresponds to the following quiver representation. 
$$ E(-1)=\Coker(U^{\oplus r} \to Q^{* \oplus r}) $$ 
\end{prop}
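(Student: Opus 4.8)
The plan is to show that $E(-1)$, viewed as an object of the subcategory $\langle U, Q^*\rangle \simeq \mathrm{D}(\CC\Gamma\text{-mod})$, is concentrated in a single cohomological degree and has the prescribed presentation. Since $\Gamma$ has two vertices with three arrows, the relevant quiver representation is determined by a linear map $\CC^{a}\otimes U \to \CC^{b}\otimes Q^*$ recording the images of $E(-1)$ under the two projection functors onto the exceptional objects $U$ and $Q^*$; to realize $E(-1)$ as a cokernel $\Coker(U^{\oplus r}\to Q^{*\oplus r})$ I must show that both dimension vectors equal $r$ and that the representation is a genuine sheaf cokernel (i.e. the complex has cohomology only in one spot). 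First I would compute the two multiplicities by evaluating $\chi(U, E(-1))$ and $\chi(Q^*, E(-1))$ using the Borel--Weil--Bott cohomology numbers from Lemma \ref{computation of cohomology} together with the Euler-characteristic formula in the quiver (Proposition [\cite{CB}]); these Hom/Ext counts should pin the dimension at each vertex to be exactly $r$.

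The key technical step is to apply Faenzi's resolution of the diagonal from Proposition \ref{resolution of diagonal}. Convolving $E(-1)$ against the second resolution
$$ 0 \to U(-1)\boxtimes U \to Q^*(-1)\boxtimes Q^* \to \cO(-1)\boxtimes \Omega^1_{\PP^6}(1)|_{V_5} \to \cO \to \cO_\Delta \to 0 $$
expresses $E(-1)=\bR q_* (p^* E(-1)\otimes \cO_\Delta)$ as the total complex obtained by applying $\bR q_*(p^*E(-1)\otimes -)$ termwise. The third and fourth terms of the resolution involve $\cO(-1)$ and $\cO$, and the Ulrich vanishing $H^*(V_5, E(-1)) = H^*(V_5, E(-2)) = 0$ (recorded in the proof of the lemma identifying $E(-1)\in\langle U, Q^*\rangle$) forces those contributions to vanish. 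What survives is a two-term complex of the form $U^{\oplus r}\to Q^{*\oplus r}$, where the multiplicities come from the cohomology of $E(-1)$ twisted by $U(-1)$ and $Q^*(-1)$ on the $p$-factor, again computable from Lemma \ref{computation of cohomology}.

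The hard part will be establishing that this surviving two-term complex has vanishing cohomology in the "kernel" degree, so that $E(-1)$ really is the cokernel of a single map $U^{\oplus r}\to Q^{*\oplus r}$ rather than a genuine two-term object with nontrivial $H^{-1}$. For this I would invoke the Corollary above (every object of $\mathrm{D}(\CC\Gamma\text{-mod})$ splits as $\bigoplus_i H^i(F)[-i]$) to reduce to checking that $E(-1)$ is a pure sheaf in the quiver heart: since $E$ is a vector bundle, $E(-1)$ is a sheaf, and the exceptional collection of Proposition \ref{derived category} guarantees its image lands in cohomological degree zero of the quiver $t$-structure. It then remains to verify that the induced map of bundles is fiberwise surjective, which follows from rank and Chern-class bookkeeping: both $U^{\oplus r}$ and $Q^{*\oplus r}$ have the same first Chern class behaviour needed for $E(-1)$ to have rank $r$, and the Euler characteristic computation already forces the map to be surjective with the correct cokernel. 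I expect the surjectivity/purity verification, rather than the diagonal convolution itself, to be the main obstacle.
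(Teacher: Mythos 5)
Your overall route is the paper's route: convolve $E(-2)$ against Faenzi's mutated resolution of the diagonal (Proposition \ref{resolution of diagonal}) and argue that the $\cO$ and $\cO(-1)$ terms drop out, leaving a two-term complex $U^{\oplus r} \to {Q^*}^{\oplus r}$. But there is a genuine gap at the step you treat as routine: the multiplicities at the two vertices are governed by $H^*(V_5, U \otimes E(-2))$ and $H^*(V_5, Q^* \otimes E(-2))$, and these are \emph{not} computable from Lemma \ref{computation of cohomology}, which only records $H^i(V_5,U(j))$ and $H^i(V_5,Q^*(j))$ with no $E$ in sight. Likewise, $\chi(U,E(-1))$ and $\chi(Q^*,E(-1))$ alone give you an Euler characteristic, i.e.\ a difference $\hom - \mathrm{ext}^1$, not the dimension vector; to extract the actual dimensions you must kill the other cohomology groups. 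This vanishing is the entire content of the paper's proof: it uses semistability of Ulrich bundles (slope arguments give $H^0(V_5,U\otimes E(-2))=0$ and, via Serre duality, $H^3(V_5,Q^*\otimes E(-2))\cong H^0(V_5,Q\otimes E^*)^*=0$), the Ulrich conditions themselves, and the tautological sequences $0 \to U \to \cO^{\oplus 5} \to Q \to 0$ and $0 \to Q^* \to \cO^{\oplus 5} \to U(1) \to 0$ to propagate vanishing into degree $1$ (e.g.\ $H^1(V_5,U\otimes E(-2))\cong H^0(V_5,Q\otimes E(-2))=0$); only then does Riemann--Roch give $h^2(V_5,U\otimes E(-2))=\chi=r$ and $h^2(V_5,Q^*\otimes E(-2))=\chi=r$. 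Your proposal never invokes semistability or Serre duality, so none of these vanishings are available to you.

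This gap also undermines your separate ``purity'' step. Once one knows both cohomology groups are concentrated in a single degree, the convolution immediately yields the short exact sequence of sheaves $0 \to U^{\oplus r} \to {Q^*}^{\oplus r} \to E(-1) \to 0$, so there is no residual surjectivity or $H^{-1}$ issue to check by ``Chern-class bookkeeping.'' Conversely, your fallback argument for purity --- that since $E(-1)$ is a sheaf, Proposition \ref{derived category} ``guarantees its image lands in cohomological degree zero of the quiver $t$-structure'' --- is unjustified and essentially circular: the projection of a sheaf into $\langle U, Q^* \rangle$ is a priori a complex, and the quiver heart does not contain the geometric heart; ruling out extra cohomology is exactly what the degree-$0,1,3$ vanishing of $H^*(V_5,U\otimes E(-2))$ and $H^*(V_5,Q^*\otimes E(-2))$ accomplishes (note the surviving cohomology sits in degree $2$, not $0$, which your picture does not account for). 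So the scaffolding of your argument matches the paper, but the load-bearing computation is missing and replaced by an appeal to a lemma that does not apply.
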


\begin{proof}
Recall that the tensor product of semistable bundles is semistable (cf. \cite[Theorem 3.1.4]{HL}). 
Since an Ulrich bundle on $V_5$ is semistable by Theorem \ref{semistable bundle}, 
$U \otimes E$ and $Q \otimes E$ are semistable. Also note that $c_1(E(-1))=0$ (cf. \cite{CHGS}) and hence $E(-1)$ is normalized. 
Applying the argument used to prove Theorem 6.3 of \cite{Faenzi}, $E(-1)$ splits into the direct sum of two bundles $E_1$ and $E_2$ defined by 
\[
0 \to H^1(Q(-1) \otimes E(-1)) \otimes U \to H^1(U \otimes E(-1)) \otimes Q^* \oplus H^0(E(-1)) \otimes \mathcal O \to E_1 \to 0, 
\]
\[
0 \to E_2 \to H^2(Q(-1) \otimes E(-1)) \otimes U \to H^2(U \otimes E(-1)) \otimes Q^* \oplus H^0(E(-1)) \otimes \mathcal O \to 0. 
\]
From the exact sequence
$$ 0 \to Q^* \to \cO^{\oplus 5} \to U^* \to 0 $$
and the isomorphism $U^* \cong U(1) $, 
we have the following exact sequence
$$ 0 \to Q^* \otimes E(-2) \to E(-2)^{\oplus 5} \to U \otimes E(-1) \to 0. $$
Then we have an isomorphism $H^2(U \otimes E(-1)) \cong H^3(Q^* \otimes E(-2))$ since $E$ is an Ulrich bundle.
The Serre duality and stability condition imply $H^3(Q^* \otimes E(-2)) \cong H^0(Q \otimes E^*)^* = 0$. 
Hence $E_2$ is isomorphic to a direct sum of copies of $U$ because $H^0(E(-1)) = 0$. 
However, since $U(1)$ is not Ulrich, $E(-1)$ does not contain $U$ as a direct summand. 
This yields that $E_2 = 0$ and there exist integers $a, b$ such that $E(-1)$ is the cokernel of $U^{\oplus a} \to Q^{* \oplus b}$.
Comparing the ranks of bundles in the exact sequence 
$$ 0 \to U^{\oplus a} \to Q^{* \oplus b} \to E(-1) \to 0, $$
we obtain that $3b - 2a = r$. 
The above exact sequence twisted by $\mathcal O_{V_5}(-2)$ induces an isomorphism 
$$ 0 = H^2(E(-3)) \to H^3(U(-2)^{\oplus a}) \to H^3(Q^*(-2)^{\oplus b}) \to H^3(E(-3)) = 0. $$
Since $H^3(U(-2)) = H^3(Q^*(-2)) = \mathbb C^5$ by Lemma \ref{computation of cohomology}, we have $a = b = r$, which complete the proof. 
\end{proof}

A similar strategy to the above proof was used earlier in \cite{CKL, Faenzi, Kuznetsov}. 
Especially, Faenzi gave a similar description of general ACM bundles in \cite{Faenzi}. 

\vskip 1em 

Let us compute the image of the line bundle $\mathcal{O}(2)$ on $V_5$ in the triangular subcategory $\langle U, Q^* \rangle$ of the derived category $\mbox{D}(V_5)$. 

\begin{lemm}
The image of $\cO(2)$ in $\langle U, Q^* \rangle$ is $\Coker(U^{\oplus 5} \to {Q^*}^{\oplus 10})[2].$
\end{lemm}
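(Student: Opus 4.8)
The plan is to mimic the argument of Proposition \ref{E(-1)}, with the line bundle $\cO(2)$ playing the role of the Ulrich twist $E(-1)$. Since tensoring by $\cO(1)$ is an autoequivalence carrying the decomposition $\mbox{\rm D}(V_5)=\langle U,Q^*,\cO,\cO(1)\rangle$ to $\mbox{\rm D}(V_5)=\langle U(-1),Q^*(-1),\cO(-1),\cO\rangle$, computing the image of $\cO(2)$ in $\langle U,Q^*\rangle$ is equivalent to computing the image of $\cO(1)$ in $\langle U(-1),Q^*(-1)\rangle$ and then twisting the answer back by $\cO(1)$. So I would first reduce to the latter problem.

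To solve it, I would convolve $\cO(1)$ with the second resolution of the diagonal of Proposition \ref{resolution of diagonal},
$$ 0 \to U(-1) \boxtimes U \to Q^*(-1) \boxtimes Q^* \to \cO(-1) \boxtimes \Omega^1_{\PP^6}(1)|_{V_5} \to \cO \to \cO_{\Delta} \to 0. $$
Applying the Fourier--Mukai formalism exactly as in Proposition \ref{E(-1)}, the object $\cO(1)$ is reconstructed from the graded pieces
$$ U(-1)\otimes H^*(V_5,U(1)),\quad Q^*(-1)\otimes H^*(V_5,Q^*(1)),\quad \cO(-1)\otimes H^*(V_5,\Omega^1_{\PP^6}(2)|_{V_5}),\quad \cO\otimes H^*(V_5,\cO(1)). $$
By Lemma \ref{computation of cohomology} the first two cohomology groups are $H^*(V_5,U(1))=\CC^5$ and $H^*(V_5,Q^*(1))=\CC^{10}$, both concentrated in degree $0$. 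The last two pieces involve only $\cO(-1)$ and $\cO$, hence lie in the complementary component $\langle \cO(-1),\cO\rangle$ and drop out of the projection onto $\langle U(-1),Q^*(-1)\rangle$. Therefore the image of $\cO(1)$ in $\langle U(-1),Q^*(-1)\rangle$ is the two-term complex $U(-1)^{\oplus 5}\to Q^*(-1)^{\oplus 10}$, and twisting back by $\cO(1)$ yields the image of $\cO(2)$ as $\Coker(U^{\oplus 5}\to Q^{*\oplus 10})$, as claimed (the ranks $5$ and $10$ being read off directly from $h^0(U(1))$ and $h^0(Q^*(1))$).

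The step I expect to be the main obstacle is the \emph{collapse of the convolution to an honest cokernel}. One must check that the coefficient cohomology groups are each concentrated in a single cohomological degree --- which Lemma \ref{computation of cohomology} guarantees for the $U$- and $Q^*$-terms --- so that the relevant part of the Beilinson-type spectral sequence degenerates and the connecting arrow $U(-1)^{\oplus 5}\to Q^*(-1)^{\oplus 10}$ is injective as a map of sheaves (it is the leftmost map in the resulting exact sequence of sheaves). Only then is the projection quasi-isomorphic to the single sheaf $\Coker(U(-1)^{\oplus 5}\to Q^*(-1)^{\oplus 10})$ placed in degree $0$, rather than a more complicated object of the derived category; the twist by $\cO(1)$ then delivers the stated result.
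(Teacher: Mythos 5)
Your proposal follows the paper's own proof essentially verbatim: reduce via the twist by $\cO(-1)$ to computing the image of $\cO(1)$ in $\langle U(-1), Q^*(-1)\rangle$, convolve with the second resolution of the diagonal from Proposition \ref{resolution of diagonal}, and read off the ranks $5$ and $10$ from $H^*(V_5,U(1))$ and $H^*(V_5,Q^*(1))$ in Lemma \ref{computation of cohomology}, the $\cO(-1)$- and $\cO$-terms being killed by the projection. Your added care about the coefficient cohomologies being concentrated in degree $0$ and the injectivity of $U(-1)^{\oplus 5}\to Q^*(-1)^{\oplus 10}$ (so the projection is an honest sheaf $\Coker(U^{\oplus 5}\to {Q^*}^{\oplus 10})$, matching the proof's conclusion rather than the $Q^{\oplus 10}$ typo in the statement) is a correct clarification of a point the paper leaves implicit, not a different route.
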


\begin{proof}
As a quiver representation, to find the image of $\cO(2)$ in $\langle U, Q^* \rangle$ is equivalent to find the image of $\cO(1)$ in $\langle U(-1), Q^*(-1) \rangle.$
From the 
resolution of the diagonal over $V_5 \times V_5$ in Proposition \ref{resolution of diagonal}, 
we have the following exact sequence
\begin{align*}
0 & \to U(-1) \otimes H^*(V_5,U(1)) \to Q^*(-1) \otimes H^*(V_5,Q^*(1)) \\
& \to \cO(-1) \otimes H^*(V_5,\Omega^1_{\PP^6}(2)|_{V_5}) \to \cO \otimes H^*(V_5,\cO(1)) \to \cO(1) \to 0 
\end{align*}
on $V_5.$
We already in Lemma \ref{computation of cohomology} saw that 
\begin{displaymath}
H^i(V_5,U(1))=\left \{ {\begin{array}{ll} 
\CC^5 & \textrm{if $i=0$,} \\ 
0 & \textrm{otherwise,} 
\end{array}}
\right.
\end{displaymath}
and
\begin{displaymath}
H^i(V_5,Q^*(1))=\left \{ {\begin{array}{ll} 
\CC^{10} & \textrm{if $i=0$,} \\ 
0 & \textrm{otherwise.} 
\end{array}}
\right.
\end{displaymath}
Therefore, we see that the image of $\cO(2)$ in $\langle U, Q^* \rangle$ is $\Coker(U^{\oplus 5} \to {Q^*}^{\oplus 10})[2].$
\end{proof}

From now on, let us use $R_{\bullet}$ to denote the quiver representation of $\Gamma$ corresponding to the image of $\mathcal{O}(2)$ in $\langle U, Q^* \rangle.$

\begin{prop}\label{dimension}
Let $r \geq 2$ be an integer. Suppose that there exists a stable Ulrich bundle of rank $r$ on $V_5.$ Then the moduli space of stable Ulrich bundles of rank $r$ is smooth of dimension $r^2+1.$ 
\end{prop}
\begin{proof}
From the presentation of a stable Ulrich bundle $E$ of rank $r$ via quiver representation (Proposition \ref{E(-1)})
$$ E(-1)=\Coker(U^{\oplus r} \to {Q^*}^{\oplus r}),  $$ 
we see that
$$ ext^2(E,E)=ext^3(E,E)=0 $$
and
$$ ext^0(E,E) - ext^1(E,E)= \chi(E,E)=(r^2 + r^2) -3r^2 = -r^2 $$
by Proposition \ref{path algebra and quiver representation}. 
Note that the path algebra associated to the quiver is \emph{hereditary}, i.e., $\Ext^i(A_{\Gamma}, B_{\Gamma})=0$ for any $i \geq 2$ and $\Gamma$-representations $A_{\Gamma}, B_{\Gamma}.$ 
Therefore, the moduli space of stable rank $r$ Ulrich bundles is smooth of dimension $r^2+1.$
\end{proof}

It remains to prove the existence of rank $r$ Ulrich bundles on $V_5$ for any $r \geq 2.$ 
From the Serre correspondence in \cite{Beauville} and explicit construction in \cite{AC}, we know that there are rank 2 and rank 3 Ulrich bundles on $V_5$ (Propositions \ref{rank 2 Ulrich} and \ref{rank 3 Ulrich}). 
Then we can prove it via deformation argument which was also used in \cite{CHGS, CKL}. 
The argument is almost the same as that of \cite[Theorem 5.7]{CHGS}, possibly except the computation of $ext^i.$ 
We write the proof for the convenience of readers.


\begin{prop}\label{existence}
For any $r \geq 2$, there is a stable Ulrich bundle of rank $r$ on $V_5.$
\end{prop}
\begin{proof}
We already saw that there exist Ulrich bundles of rank 2 and rank 3 on $V_5$. 
Because there is no Ulrich line bundle on $V_5,$ rank 2 and rank 3 Ulrich bundles are stable. 
For $r \geq 4,$ we will use induction to prove the existence of rank $r$ Ulrich bundles on $V_5$. 

Let us consider a stable rank 2 Ulrich bundle $E_1$ and a stable rank $r-2$ Ulrich bundle $E_2$ 
which are non-isomorphic to each other and have a non-trivial extension of the following form 
$$ 0 \to E_1 \to E \to E_2 \to 0. $$
Because every Ulrich bundle arises from a quiver representation, 
$ext^i(E_2,E_1)=0$ for $i=2,3$. 
We also have $\Hom(E_2, E_1) = \Hom(E_2(-1), E_1(-1))=0$ since $E_1(-1)$ and $E_2(-1)$ are non-isomorphic stable bundles with the same slope. 
Therefore, we see that $ext^1(E_2,E_1)=-\chi(E_2,E_1)=2(r-2)$ and 
there are non-trivial such extensions. 

We see that $E$ is a simple vector bundle from \cite[Lemma 4.2]{CHGS} and there is an $(r^2+1)$-dimensional smooth modular family of simple vector bundles having the same rank and Chern classes with $E$ from \cite[Proposition 2.10]{CHGS}. 
Being Ulrich bundle is an open condition, so there is an $(r^2+1)$-dimensional family of Ulrich bundles among the modular family. 
Via similar computations as above, 
we see that the locus of strictly semistable Ulrich bundles 
arising as extensions of rank $r_1$ Ulrich bundles by rank $r-r_1$ Ulrich bundles has dimension at most $r_1^2+1 + (r-r_1)^2+1+r_1(r-r_1)=r^2-r_1r+r_1^2+2$ 
which is strictly less than $r^2+1$ when $r \geq 4.$ 
In other words, they form a proper subfamily of the $(r^2+1)$-dimensional family of simple Ulrich bundles. 
Therefore, we see that there is a stable rank $r$ Ulrich bundle on $V_5.$ 
\end{proof}

Using the existence, we can prove that a general member of the above quiver representations gives an Ulrich bundle on $V_5$ .

\begin{lemm}\label{cokernel}
Let $ E_t(-1)=\Coker(U^{\oplus r} \to {Q^*}^{\oplus r}) $ be a generic member of the quiver representation of $\Gamma$. 
Then $E_t$ is an Ulrich bundle of rank $r$ on $V_5.$
\end{lemm}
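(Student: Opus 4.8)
The plan is to parametrize the relevant quiver representations by the affine space $W := \Hom(U^{\oplus r}, {Q^*}^{\oplus r})$, where a point $\phi \in W$ encodes the three matrices of arrows of $\Gamma$ on the dimension vector $(r,r)$, and to set $E := (\Coker \phi)(1)$. Since a ``general member'' of this family is just a general point $\phi \in W$, it suffices to exhibit a nonempty open subset $W^{\mathrm{Ulr}} \subseteq W$ over which $E$ is a rank $r$ Ulrich bundle; being nonempty and open, such a subset is automatically dense. The whole argument then reduces to (i) translating the Ulrich conditions into open conditions on $\phi$, and (ii) producing one point of $W^{\mathrm{Ulr}}$.

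Concretely, I would show that $E$ is an Ulrich bundle exactly when $\phi$ satisfies two conditions. Condition $(A)$: the map $\phi \colon U^{\oplus r} \to {Q^*}^{\oplus r}$ is fiberwise injective, so that its cokernel is locally free of rank $3r - 2r = r$ and fits into a short exact sequence $0 \to U^{\oplus r} \to {Q^*}^{\oplus r} \to E(-1) \to 0$. Condition $(B)$: the induced linear map $H^3(V_5, U^{\oplus r}(-2)) \to H^3(V_5, {Q^*}^{\oplus r}(-2))$ is an isomorphism. Both are open: for $(A)$, the degeneracy locus $\{(\phi,x) : \rk \phi_x < 2r\}$ is closed in $W \times V_5$, and since $V_5$ is proper its image in $W$ is closed, so the locus of $\phi$ with empty degeneracy locus is open; for $(B)$, the displayed map depends linearly on $\phi$ and goes between fixed vector spaces of equal dimension $5r$ (by Lemma \ref{computation of cohomology}, which gives $H^3(V_5,U(-2)) \cong H^3(V_5,Q^*(-2)) \cong \CC^5$), so its invertibility is the non-vanishing of a determinant.

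Granting $(A)$ and $(B)$, the Ulrich vanishing $H^i(V_5, E(-j)) = 0$ for $j = 1,2,3$ follows directly by twisting the sequence of $(A)$ to $0 \to U^{\oplus r}(-k) \to {Q^*}^{\oplus r}(-k) \to E(-1)(-k) \to 0$ for $k = 0,1,2$ and reading off the long exact sequence against Lemma \ref{computation of cohomology}: for $k = 0,1$ both outer terms have vanishing cohomology, forcing $H^*(V_5,E(-1)) = H^*(V_5,E(-2)) = 0$; for $k = 2$ the only surviving piece is the map in $(B)$, whose bijectivity yields $H^*(V_5,E(-3)) = 0$. Hence $W^{\mathrm{Ulr}} = \{(A)\} \cap \{(B)\}$ is open and precisely cuts out the Ulrich locus.

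Finally, to see $W^{\mathrm{Ulr}} \neq \emptyset$ I would invoke the preceding existence lemma: there is a stable rank $r$ Ulrich bundle $E_0$, and by Proposition \ref{E(-1)} its twist is $E_0(-1) = \Coker \phi_0$ for some $\phi_0 \in W$, which therefore lies in $W^{\mathrm{Ulr}}$. Thus a general $\phi$ gives an Ulrich bundle of rank $r$. The genuinely delicate point is condition $(A)$: for $r = 2$ the expected codimension $r+1 = 3$ of the degeneracy locus equals $\dim V_5$, so fiberwise injectivity of a general $\phi$ cannot be inferred from a naive dimension count and must instead be obtained from openness together with the existence lemma; correspondingly, condition $(B)$ is the essential cohomological input, as the isomorphism on $H^3$ is not formally implied by the injectivity of $\phi$ alone.
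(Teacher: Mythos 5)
Your proposal is correct and takes essentially the same route as the paper: twist the short exact sequence $0 \to U^{\oplus r} \to {Q^*}^{\oplus r} \to E(-1) \to 0$ against Lemma \ref{computation of cohomology} to reduce the Ulrich conditions to fiberwise injectivity of $\phi$ plus the isomorphism $H^3(V_5,U^{\oplus r}(-2)) \to H^3(V_5,{Q^*}^{\oplus r}(-2))$, note both conditions are open, and obtain nonemptiness from the existence lemma via Proposition \ref{E(-1)}. Your only additions are the explicit justifications of openness (properness of $V_5$ for the degeneracy locus, a determinant for the $H^3$ map) that the paper compresses into its final sentence, ``being injective morphism between vector bundles and being isomorphism are open conditions.''
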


\begin{proof}
For $r \geq 2$ we know that there is an Ulrich bundle $E_0$ of rank $r$ on $V_5$ from the previous proposition. 
Then 
$E_0$ can be written as $ E_0(-1) \cong \Coker(U^{\oplus r} \to {Q^*}^{\oplus r}) $ by Proposition \ref{E(-1)}.
From the condition $ E_0(-1) \cong \Coker(U^{\oplus r} \to {Q^*}^{\oplus r}) $ and the results in Lemma \ref{computation of cohomology}, 
we have $H^*(V_5,E_0(-1))=H^*(V_5,E_0(-2))=0.$ 
Moreover, we have $H^i(V_5,E_0(-3))=0$ for $i=0,1,2$ and the following exact sequence 
$$ 0 \to H^3(V_5,U(-2)^{\oplus r}) \to H^3(V_5,{Q^*}(-2)^{\oplus r}) \to H^3(V_5,E_0(-3)) \to 0. $$
We already computed that the first and second terms are isomorphic $5r$-dimensional vector spaces since $E_0$ is an Ulrich bundle. 
We know that there is a $3r^2$-dimensional family of morphisms (it is $(r^2+1)$-dimensional if we consider equivalent classes of morphisms) from $U^{\oplus r}$ to ${Q^*}^{\oplus r}.$ 
Because being injective morphism between vector bundles is an open condition, we can see that a generic morphism from $U^{\oplus r}$ to ${Q^*}^{\oplus r}$ is injective. 
Let us consider the cokernel $E_t(-1)$ of a generic morphism from $U^{\oplus r}$ to ${Q^*}^{\oplus r}.$ 
Via the same computation as above we see that $H^*(V_5,E_t(-1))=H^*(V_5,E_t(-2))=0.$ 
Moreover, we have $H^i(V_5,E_t(-3))=0$ for $i=0,1,2$ and get the following exact sequence 
$$ 0 \to H^3(V_5,U(-2)^{\oplus r}) \to H^3(V_5,{Q^*}(-2)^{\oplus r}) \to H^3(V_5,E_t(-3)) \to 0. $$
Because inducing isomorphism $H^3(V_5,U(-2)^{\oplus r}) \to H^3(V_5,{Q^*}(-2)^{\oplus r})$ is an open condition, we see that $H^3(V_5,E_t(-3))=0$ for 
a generic member $E_t(-1)$ of the quiver representation of $\Gamma$. 
Since a sheaf satisfying these vanishing conditions is an Ulrich bundle (cf. \cite{CH}), 
$E_t$ is an Ulrich bundle of rank $r$ on $V_5$.
\end{proof}



Using similar arguments in \cite{CKL}, we have the following result.

\begin{prop}\label{preserving stability}
For any $r \geq 2,$ a (semi-)stable Ulrich bundle corresponds to a (semi-)stable quiver representation. 
\end{prop}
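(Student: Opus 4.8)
The plan is to exploit the equivalence $\Phi\colon \langle U, Q^*\rangle \xrightarrow{\sim} \mbox{D}(\CC\Gamma\mbox{-mod})$ furnished by Bondal's theorem, packaged together with the functor $\Psi(-) = \Phi\bigl((-)(-1)\bigr)$. By Proposition \ref{E(-1)} and the Corollary (the path algebra is hereditary, so every object of $\mbox{D}(\CC\Gamma\mbox{-mod})$ is the sum of its shifted cohomologies), a rank $r$ Ulrich bundle $E$ is carried by $\Psi$ to an honest $\CC\Gamma$-module $\Psi(E) = \Coker(U^{\oplus r}\to {Q^*}^{\oplus r})$ concentrated in degree $0$, with dimension vector $\underline{d}=(r,r)$ (consistent with the earlier value $\chi(E,E)= 2r^2-3r^2=-r^2$ for the quiver with three arrows). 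Since $\Phi$ is exact, any short exact sequence $0\to F\to E\to G\to 0$ of Ulrich bundles becomes, after twisting into $\langle U,Q^*\rangle$, a short exact sequence of modules; in particular sub-Ulrich bundles with Ulrich quotient correspond to subrepresentations of \emph{square} dimension vector $(s,s)$. The first concrete step is to pin down the King weight: one takes $\theta=(1,-1)$, so that $\theta(\underline{d}^{\Psi(E)})=0$ and, crucially, $\theta$ vanishes on every square vector $(s,s)$, these being exactly the dimension vectors of images of sub-Ulrich bundles.

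The key input on the sheaf side is that all Ulrich bundles share one and the same reduced Hilbert polynomial $p_0$, forced by the vanishing $H^\bullet(E(-j))=0$; hence Gieseker (semi)stability of $E$ is governed entirely by saturated subsheaves $F\subset E$ with $p_F\ge p_0$. By the reasoning behind Theorem 2.9 of \cite{CHGS}, such an $F$ and the quotient $E/F$ may be taken to be Ulrich, so destabilizing subobjects of $E$ match destabilizing subrepresentations of $\Psi(E)$ of square type. I would then compare the two numerical inequalities directly: for a square subrepresentation the equality $\theta(\underline{d}^W)=0=\theta(\underline{d}^{\Psi(E)})$ corresponds precisely to the equality $p_F=p_0=p_E$ of reduced Hilbert polynomials, so a proper sub-Ulrich bundle makes $E$ strictly semistable (not stable) exactly when the corresponding square subrepresentation violates $\theta$-stability but not $\theta$-semistability. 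Combined with Casanellas--Hartshorne (every Ulrich bundle is semistable), this already forces $\Psi(E)$ to be $\theta$-semistable and reduces the whole statement to the equivalence of the two notions of \emph{strict} stability.

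The main obstacle is the converse passage, from subrepresentations back to subsheaves, and the control of subrepresentations of non-square dimension vector. A subrepresentation $W\subset\Psi(E)$ only produces, via $\Phi^{-1}$, a subobject $\Phi^{-1}(W)$ of $E(-1)$ in the exotic heart $\cA=\Phi^{-1}(\CC\Gamma\mbox{-mod})\subset \mbox{D}(V_5)$, which need not be a coherent subsheaf at all. The crux is therefore to show that every $\theta$-destabilizing subrepresentation can be replaced by one of square type $(s,s)$ (using $\theta$-semistability, so that no subrepresentation has $\dim W_1>\dim W_2$ and the extremal ones satisfy $\dim W_1=\dim W_2$), and that such square subrepresentations genuinely lift to sub-Ulrich bundles with Ulrich quotient. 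This is exactly the point where I would follow the arguments of \cite{CKL}: once the posets of subobjects are matched in an order- and slope-preserving fashion, the stable statements coincide, and the identical bookkeeping shows the semistable versions agree. It follows that a (semi-)stable Ulrich bundle corresponds to a $\theta$-(semi-)stable representation of $\Gamma$, and conversely.
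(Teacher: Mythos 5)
Your dictionary is essentially the paper's: your weight $\theta=(1,-1)$ agrees up to the factor $5$ with the paper's $\theta=\chi(R_{\bullet},-)$, since $R_{\bullet}=\Coker(U^{\oplus 5}\to {Q^*}^{\oplus 10})$ has dimension vector $(5,10)$ and $\chi((5,10),(b_1,b_2))=5b_1+10b_2-15b_2=5(b_1-b_2)$, and the matching of sub-Ulrich bundles with square subrepresentations $(s,s)$ is also how the paper argues. But there is a genuine gap, at exactly the point you flag and then defer to \cite{CKL}: subrepresentations of non-square dimension vector. Your only stated reason that $\Psi(E)$ is $\theta$-semistable is that Casanellas--Hartshorne semistability of $E$ ``already forces'' it; that inference does not go through, because sheaf semistability constrains only subsheaves of $E$, whereas a subrepresentation with $\dim W_1>\dim W_2$ corresponds, as you yourself observe, to a subobject of $E(-1)$ in the exotic heart that need not be a coherent subsheaf, so nothing on the sheaf side rules it out. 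Worse, your subsequent reduction of destabilizing subrepresentations to square ones explicitly invokes ``$\theta$-semistability, so that no subrepresentation has $\dim W_1>\dim W_2$'' --- i.e., the very statement being proved; the argument is circular at its load-bearing step.

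The missing idea, and the paper's actual mechanism, is homological and bypasses lifting entirely. Taking $\theta=\chi(R_{\bullet},-)$ with $R_{\bullet}$ the projection of $\mathcal{O}(2)$, the Ulrich vanishings give $\Hom(R_{\bullet},V_{\bullet})=0$ and $\chi(R_{\bullet},V_{\bullet})=0$, hence also $\Ext^1(R_{\bullet},V_{\bullet})=0$. Since $\CC\Gamma$ is hereditary, any short exact sequence $0\to V_{\bullet}'\to V_{\bullet}\to V_{\bullet}''\to 0$ yields a six-term exact sequence forcing $\Hom(R_{\bullet},V_{\bullet}')=0$ and $\Ext^1(R_{\bullet},V_{\bullet}'')=0$, whence $\theta(V_{\bullet}')=-\dim\Ext^1(R_{\bullet},V_{\bullet}')\leq 0=\theta(V_{\bullet})$ for \emph{every} subrepresentation, square or not; that single computation is the entire semistability proof. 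For the stable direction the paper does carry out the lifting you gesture at: given a proper subrepresentation $W_{\bullet}$ with $\theta(W_{\bullet})=0$, corresponding to $U^{\oplus a}\to {Q^*}^{\oplus b}$, a commutative diagram and the Snake Lemma produce $0\to Ker\to F\to E(-1)\to Cok\to 0$ where $F$ satisfies $H^i(V_5,F(-j))=0$ for $i=0,1,2,3$ and $j=1,2,3$, so $F$ is an Ulrich subobject with $\mu(F)=\mu(E(-1))$, contradicting stability of $E$. Your sheaf-side observations (common reduced Hilbert polynomial; destabilizing subsheaves of an Ulrich bundle may be taken Ulrich) are correct and match the paper's handling of the strictly semistable case, but without the $\chi(R_{\bullet},-)$ vanishing argument the semistable half of the proposition --- and with it your stable half, which rests on it --- remains unproven.
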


\begin{proof}
Let $E$ be an Ulrich bundle of rank $r$ on $V_5.$ 
Then by Proposition \ref{E(-1)} we have the following sequence
$$ 0 \to U^{\oplus r} \to {Q^*}^{\oplus r} \to E(-1) \to 0 . $$
Let $V_{\bullet}$ be the quiver representation corresponding to $E(-1).$ 
From the definition of Ulrich bundles, we see that $\chi(R_{\bullet},V_{\bullet})=0$ and $H^0(R_{\bullet},V_{\bullet})=0.$ 

Let $V_{\bullet}'$ be a subrepresentation of $V_{\bullet}$ and consider the following exact sequence
$$ 0 \to V_{\bullet}' \to V_{\bullet} \to V_{\bullet}'' \to 0 . $$
Then we have the 
long exact sequence
\begin{align*}
0 & \to H^0(R_{\bullet},V_{\bullet}') \to H^0(R_{\bullet},V_{\bullet}) \to H^0(R_{\bullet},V_{\bullet}'') \\
& \to H^1(R_{\bullet},V_{\bullet}') \to H^1(R_{\bullet},V_{\bullet}) \to H^1(R_{\bullet},V_{\bullet}'') \to 0,
\end{align*}
so this implies that 
$$ H^0(R_{\bullet},V_{\bullet}')=0 \mbox{ and } H^1(R_{\bullet},V_{\bullet}'')=0. $$
Hence $\chi(R_{\bullet},V_{\bullet}') \leq 0$ and $\chi(R_{\bullet},V_{\bullet}'') \geq 0.$ 
Let $\theta \colon K_0(\CC \Gamma\mbox{-mod}) \to \ZZ$ be the character defined by $\theta=\chi(R_{\bullet},-).$ 
From the above inequality, we see that $V_{\bullet}$ is a $\theta$-semistable $\Gamma$-representation.

If $E$ is strictly semistable, then there exists the 
destabilizing sequence
$$ 0 \to E' \to E \to E'' \to 0, $$
where $\mu(E')=\mu(E)$, and from \cite{Beauville, CHGS} we see that $E'$ is also an Ulrich bundle of smaller rank. 
Therefore, 
$V_{\bullet}$ is a strictly semistable $\Gamma$-representation. 

Suppose that $E$ is a stable Ulrich bundle but the corresponding quiver representation $V_{\bullet}$ is strictly semistable. 
Let $W_{\bullet}$ be the proper $\Gamma$-subrepresentation of $V_{\bullet}$ with $\theta(W_{\bullet})=0$ 
and let $U^{\oplus a} \to {Q^*}^{\oplus b}$ be the corresponding element in $\langle U, Q^* \rangle.$ 
Then we have the following commutative diagram.

\begin{displaymath}
\xymatrix{ 
 & U^{\oplus a} \ar[r] \ar[d] & \ar[d] {Q^*}^{\oplus b} \ar[d]  \ar[r] &  F(-1) \ar[d] \\
0 \ar[r] \ar[d] & U^{\oplus r} \ar[r] \ar[d] & {Q^*}^{\oplus r} \ar[r] \ar[d] & \ar[d] E(-1)  \\ 
\mbox{Ker} \ar[r] & U^{\oplus (r-a)} \ar[r] & {Q^*}^{\oplus (r-b)} \ar[r] & \mbox{Cok} }
\end{displaymath}
From the snake lemma, we have the following exact sequence 
$$ 0 \to \mbox{Ker} \to F(-1) \to E(-1) \to \mbox{Cok} \to 0. $$
Because $F$ is a coherent sheaf with $H^i(V_5,F(-j))=0$ for $i=0,1,2,3$ and $j=1,2,3$ from the commutative diagram and $a=b$, 
we see that $F$ is an Ulrich subbundle with $\mu(F) = \mu(E)$ 
which gives a contradiction to the stability of $E$. 
Thus, 
when $E$ is stable, the corresponding $\Gamma$-representation must be also stable.
\end{proof}

From the above discussions, we get the desired result. Let $\Gamma,$ $\theta$ be as above.

\begin{theo} \label{moduli space of Ulrich} 
Let $M(r)$ (resp. $M^s(r)$) be the moduli space of S-equivalence classes of Ulrich bundles (resp. stable Ulrich bundles) of rank $r \geq 2$ on $V_5.$ 
Then the natural functor $\mbox{\rm D}(V_5) \to \mbox{\rm D}(\CC \Gamma\mbox{\rm-mod})$ given by semiorthogonal projection induces 
a map $\varphi \colon M(r) \to \mathcal M_{(r,r)}^{{\theta}\text{-ss}}(\Gamma)$ satisfying the following properties: 
\begin{enumerate}
\item $\varphi$ maps stable (resp. strictly semistable) objects to stable (resp. strictly semistable) objects;  
\item $\varphi$ is an injective map and the image $\varphi(M(r))$ can be described as follows. 
$$
\varphi(M(r)) = \left\{  V_{\bullet} \in \mathcal M_{(r,r)}^{\theta\text{-ss}}(\Gamma) : 
				\begin{array}{ll}
					\Hom(R_{\bullet},V_{\bullet})=0 \text{ and the corresponding} \\
					\text{morphism } U^{\oplus r} \to {Q^*}^{\oplus r} \text{ is injective}
				\end{array}
				\right\}
$$
Moreover, the embedding functor $\mbox{\rm D}(\CC \Gamma\mbox{\rm-mod}) \to \mbox{\rm D}(V_5)$ induces a morphism $\psi \colon \varphi(M(r)) \to M(r)$ which is the inverse of $\varphi.$
\item $\varphi$ induces an isomorphism of $M^s(r)$ onto 
$$
\varphi(M^s(r)) = \left\{  V_{\bullet} \in \mathcal M_{(r,r)}^{\theta\text{-s}}(\Gamma) : 
				\begin{array}{ll}
					\Hom(R_{\bullet},V_{\bullet})=0 \text{ and the corresponding} \\
					\text{morphism } U^{\oplus r} \to {Q^*}^{\oplus r} \text{ is injective}
				\end{array}
				\right\}, 
$$
which forms a smooth $(r^2+1)$-dimensional open subset of $\mathcal M_{(r,r)}^{{\theta}\text{-s}}(\Gamma) \subset \mathcal M_{(r,r)}^{{\theta}\text{-ss}}(\Gamma).$ 
\end{enumerate}
\end{theo}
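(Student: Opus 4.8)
The plan is to realize $\varphi$ as the morphism induced on moduli spaces by the projection functor onto the admissible subcategory $\langle U, Q^* \rangle \simeq \mathrm{D}(\CC \Gamma\text{-mod})$, and then to read off the three properties from this equivalence together with the stability comparison established just above. To construct $\varphi$ as a morphism of varieties rather than a mere map of sets, note that by Proposition \ref{E(-1)} the assignment $E \mapsto E(-1) \mapsto V_\bullet = \Coker(U^{\oplus r} \to Q^{* \oplus r})$ is computed by the Fourier--Mukai-type transform encoded in the resolution of the diagonal in Proposition \ref{resolution of diagonal}. Since this kernel is a fixed object on $V_5 \times V_5$, the transform applies to flat families of Ulrich bundles and descends to a morphism $\varphi \colon M(r) \to \mathcal M_{(r,r)}^{\theta\text{-ss}}(\Gamma)$; the target is correct because the resulting representation has dimension vector $(r,r)$ and is $\theta$-semistable by the preceding proposition.

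Properties (1) and (2) then follow formally. For injectivity, both $E(-1)$ and $E'(-1)$ lie in $\langle U, Q^* \rangle$, on which the projection functor restricts to the equivalence with $\mathrm{D}(\CC \Gamma\text{-mod})$; hence $\varphi(E) \cong \varphi(E')$ forces $E(-1) \cong E'(-1)$ and so $E \cong E'$, and the same equivalence matches Jordan--H\"older factors and therefore $S$-equivalence classes on the two sides. Property (2), that stable (resp.\ strictly semistable) Ulrich bundles are sent to stable (resp.\ strictly semistable) representations, is exactly the content of the preceding proposition, whose Snake Lemma diagram manufactures an Ulrich subbundle of equal slope out of any destabilizing $\theta$-subrepresentation.

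For property (3) I would first identify the image. The two cut-out conditions are the Ulrich conditions transported through the equivalence: injectivity of $U^{\oplus r} \to Q^{* \oplus r}$ guarantees that its cokernel is an honest vector bundle concentrated in degree $0$, while $\Hom(R_\bullet, V_\bullet) = 0$ encodes a global-section vanishing of the Ulrich condition through the image $R_\bullet$ of $\mathcal O(2)$. The lemma on general members supplies the inverse assignment $V_\bullet \mapsto E$ with $E(-1) = \Coker(U^{\oplus r} \to Q^{* \oplus r})$, verified to be Ulrich via the Koszul resolution and Lemma \ref{computation of cohomology}; this inverse is again a morphism, built from the inclusion $\langle U, Q^* \rangle \hookrightarrow \mathrm{D}(V_5)$. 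Both conditions are open, so the image is open in $\mathcal M_{(r,r)}^{\theta\text{-s}}(\Gamma)$, and smoothness together with the dimension $r^2 + 1$ is inherited from the earlier computation $\Ext^2(E,E) = \Ext^3(E,E) = 0$ and $\chi(E,E) = -r^2$.

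The hard part will be upgrading this bijection onto the open locus to a genuine isomorphism of varieties, that is, matching the deformation-theoretic scheme structure on $M^s(r)$ with the GIT scheme structure on the quiver side. I would do this by comparing tangent--obstruction theories: under the equivalence the tangent space $\Ext^1(E,E)$ to $M^s(r)$ is identified with $\Ext^1_{\CC \Gamma}(V_\bullet, V_\bullet)$, while obstructions live in $\Ext^2$, which vanishes on both sides since $\CC \Gamma$ is hereditary and $\Ext^2(E,E) = 0$. Hence $\varphi$ induces isomorphisms on tangent spaces and is \'etale; being injective and \'etale onto a smooth target, it is an open immersion identifying $M^s(r)$ with the stated open subset.
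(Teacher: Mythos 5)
Your proposal is correct and follows essentially the same route as the paper, which proves this theorem simply by assembling the preceding results (Proposition \ref{E(-1)}, the computation of $R_{\bullet}$, the smoothness/dimension count from $\mathrm{ext}^2=\mathrm{ext}^3=0$ and $\chi(E,E)=-r^2$, the general-member lemma, and the stability-comparison proposition with its Snake Lemma diagram). Your added details on the family version of the Fourier--Mukai construction and the tangent-space/\'etale argument only make explicit what the paper leaves implicit; the one small imprecision is that injectivity of $U^{\oplus r}\to Q^{*\oplus r}$ alone does not force the cokernel to be locally free --- rather, the cohomology vanishings make it an Ulrich sheaf, hence a bundle since $V_5$ is smooth, exactly as the paper uses in its semistability proposition.
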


\begin{proof}
The proof is similar to the proof of \cite[Theorem 3.13]{CKL} and \cite{Qin;V5}. 
We give 
the proof for the convenience of readers. 


From Proposition \ref{E(-1)} and Proposition \ref{preserving stability}, 
we see that there is a map from the set of isomorphism classes of Ulrich bundles on $V_5$ to the set of isomorphism classes of $\theta$-semistable $\Gamma$-representations. 
Therefore, it is enough to prove that this assignment preserves S-equivalence classes in order to show that the map $\varphi$ is well-defined. 
When $E$ is a stable Ulrich bundle, 
we know that the corresponding quiver representation is 
stable by Proposition \ref{preserving stability}. 
Hence $\varphi$ is well-defined for stable Ulrich bundles. 
For a strictly semistable Ulrich bundle $E,$ there is a Jordan--H\"older filtration whose factors are stable Ulrich bundles (cf. \cite{Beauville, CHGS}). 
Then the corresponding quiver representation gives an iterative extension of stable quiver representations which are images of the Jordan--H\"older factors. Therefore, the correspondence preserves S-equivalences and $\varphi$ is well-defined. \\


(1) follows from Proposition \ref{preserving stability}. \\

(2) Suppose that there is an injective morphism $U^{\oplus r} \to {Q^*}^{\oplus r}$ and the cokernel is orhogonal to $\cO(2).$ 
Then the cokernel twisted by $\mathcal O(1)$ is an Ulrich bundle (cf. \cite{LMS1}). 
Therefore, we see that the image $\varphi(M(r))$ 
consists of $V_{\bullet} \in \mathcal M_{(r,r)}^{\theta\text{-ss}}(\Gamma)$ such that $\Hom(R_{\bullet},V_{\bullet})=0$ and the corresponding morphism $U^{\oplus r} \to {Q^*}^{\oplus r}$ is injective, i.e., we have the following identity.
$$
\varphi(M(r)) = \left\{  V_{\bullet} \in \mathcal M_{(r,r)}^{\theta\text{-ss}}(\Gamma) : 
				\begin{array}{ll}
					\Hom(R_{\bullet},V_{\bullet})=0 \text{ and the corresponding} \\
					\text{morphism } U^{\oplus r} \to {Q^*}^{\oplus r} \text{ is injective}
				\end{array}
				\right\}
$$
Then the embedding functor $\mbox{\rm D}(\CC \Gamma\mbox{\rm-mod}) \to \mbox{\rm D}(V_5)$ induces the 
map $\psi \colon \mathcal M_{(r,r)}^{{\theta}\text{-ss}}(\Gamma) \to M(r)$.  
Now let us prove that $\psi$ is a morphism. 
Let us consider the $3r^2$-dimensional space $\cR_{(r,r)}(\Gamma)$ of quiver representations with dimension vector $(r, r)$ and its open subset $\cR^{\theta\text{-ss}}_{(r,r)}(\Gamma)$ of semistable quiver representations. 
Recall that $\mathcal M_{(r,r)}^{{\theta}\text{-ss}}(\Gamma)$ is the good quotient of $\cR^{\theta\text{-ss}}_{(r,r)}(\Gamma).$ 
Next, let us consider the $G_0$-invariant open subset $\cR^{\theta\text{-ss}}_{(r,r)}(\Gamma)^{\circ}$ consisting of $V_{\bullet} \in \mathcal R_{(r,r)}^{\theta\text{-ss}}(\Gamma)$ 
such that $\Hom(R_{\bullet},V_{\bullet})=0$ and the corresponding morphism $U^{\oplus r} \to {Q^*}^{\oplus r}$ is injective. 
Then the cokernels of the morphisms $U^{\oplus r} \to {Q^*}^{\oplus r}$ form a flat family of Ulrich bundles (cf. \cite{CH}) and 
we see that there is a morphism $\widetilde{\psi} \colon \mathcal R_{(r,r)}^{\theta\text{-ss}}(\Gamma)^{\circ} \to M(r).$ 
(Recall that $M(r)$ is a subscheme of the coarse moduli space of semistable sheaves on $V_5$ which corepresents the moduli functor 
sending a scheme $S$ to the set of semistable sheaves on $S \times V_5$ whose fiberwise Hilbert polynomials are the same as those of Ulrich bundles.) 
From a general result 
of Geometric Invariant Theory (cf. \cite{Newstead}), 
we see that $\varphi(M(r))$ is a good quotient (hence a categorical quotient) of $\mathcal R_{(r,r)}^{\theta\text{-ss}}(\Gamma)^{\circ}$ and 
hence there is a morphism $\psi \colon \varphi(M(r)) \to M(r).$ 
We can check that $\psi$ is the inverse of $\varphi$ set-theoretically. \\

(3) From Proposition \ref{preserving stability}, we see that the image $\varphi(M^s(r))$ 
consists of $V_{\bullet} \in \mathcal M_{(r,r)}^{\theta\text{-s}}(\Gamma)$ such that $\Hom(R_{\bullet},V_{\bullet})=0$ and the corresponding morphism $U^{\oplus r} \to {Q^*}^{\oplus r}$ is injective. 
For $E \in M^s(r),$ there is the following natural isomorphism 
$$ T_{E}M^s(r) \cong \Ext^1(E,E) \cong \Ext^1(\varphi(E),\varphi(E)) \cong T_{\varphi(E)} \mathcal M_{(r,r)}^{\theta\text{-s}}(\Gamma) $$
which is induced by the projection functor. 
Note that both $M^s(r)$ and $\varphi(M^s(r))$ are 
$(r^2+1)$-dimensional smooth quasi-projective varieties. 
Since we are working on the complex number field, we see that $\psi \colon \varphi(M^s(r)) \to M(r)$ is an open embedding whose image is $M^s(r)$ by 
Zariski's main theorem. 
Especially,  $\psi \colon \varphi(M^s(r)) \to M^s(r)$ is an isomorphism. 
Therefore, we obtain the desired result.
\end{proof}

It is remarkable that the moduli spaces of rank $r(\geq 2)$ Ulrich bundles on cubic threefolds, intersections of two 4-dimensional quadrics, and $V_5$ are all smooth quasi-projective varieties of dimension $r^2+1$ 
(see \cite{CKL, LMS1} for more details). 
We conjecture that for any smooth Fano 3-fold of index 2 the moduli space of rank $r (\geq 2)$ stable Ulrich bundles is nonempty and is a smooth quasi-projective variety of dimension $r^2+1.$ 
By Lemma 2.3 of \cite{CFM}, 
we deduce that if there is a stable Ulrich bundle of rank $r$ on any smooth Fano 3-fold $X$ of index 2, 
then the corresponding moduli space of stable Ulrich sheaves on $X$ at that point is smooth of dimension $r^2+1$. 


Note that smooth Fano 3-folds of Picard number 1 and index 2 were classified (Theorem 3.3.1 of \cite{IP}).  
Moreover, in many cases the ideas used in this paper are similar to those in \cite{CKL, LMS1}, 
especially the relation between stability of Ulrich bundles on Fano 3-folds and a different kind of stability of objects in the semiorthogonal components. 
Recently, Bayer, Lahoz, Macr\`{i} and Stellari introduced a method to induce Bridgeland stability conditions on semiorthogonal components in \cite{BLMS}. 
We think there might be a uniform proof using their method for the results in this paper and \cite{CKL, LMS1}, which might work for more general Fano varieties. \\

It is also an interesting task to study modular compactifications of the moduli spaces of Ulrich bundles discussed in this paper. 
Druel studied moduli spaces of semistable sheaves of rank 2 with $c_1=0, c_2=2$ and $c_3=0$ on cubic 3-folds in \cite{Druel} and it gives a natural compactification of the moduli space of Ulrich bundles of rank 2 on the cubic 3-fold. Recently, Qin studied moduli spaces of instanton sheaves on Fano 3-folds $V_4$ and $V_5$ in \cite{Qin;V5, Qin;V4}, and showed that there are similar descriptions for moduli spaces for these cases. 
Via this he obtained compactifications of the moduli spaces of stable rank 2 Ulrich bundles discussed in \cite{CKL} and this paper. 
It is an interesting question whether similar techniques will give us compactifications of moduli spaces of higher rank Ulrich bundles on these and other Fano 3-folds.

\bigskip

\end{document}